\newcounter{Definitioncount}
\newtheorem{theorem}{Theorem}
\newtheorem{proposition}[theorem]{Proposition}
\newtheorem{corollary}[theorem]{Corollary}
\theoremstyle{definition}
\newtheorem{remark}[theorem]{Remark}
\newtheorem*{Example}{Example}
\newtheoremstyle{fact}{\bigskipamount}{\medskipamount}{\upshape}{}{\itshape}{. }{ }{Fact}
\theoremstyle{fact}
\newtheoremstyle{genquest}{\bigskipamount}{\medskipamount}{\upshape}{}{\itshape}{. }{ }{General Question}
\theoremstyle{genquest}
\newtheoremstyle{step}{2\bigskipamount}{\medskipamount}{\upshape}{}{\itshape}{. }{ }{\underline{Step~\thestep}}
\theoremstyle{step}
\renewcommand{\thestep}{\arabic{step}}
\newcommand{\lra}{\longrightarrow}
\newcommand{\ra}{\rightarrow}
\newcommand{\Ra}{\Rightarrow}
\newcommand{\ldual}[1]{\mathord{{\let\nolimits\relax\sideset{^\wedge}{}{#1}}}}
\newcommand{\laction}[2]{\mathord{{\let\nolimits\relax\sideset{^{#1}}{}{#2}}}}
\newcommand{\conj}[2]{\mathord{{\let\nolimits\relax\sideset{^{#1}}{}{#2}}}}
\newcommand{\ox}{\otimes}
\newcommand{\xra}{\xrightarrow}
\newcommand{\xla}{\xleftarrow}
\newcommand{\ob}{\mathrm{ob}}
\def\CA{{\mathscr A}}
\def\CB{{\mathscr B}}
\def\CC{{\mathscr C}}
\def\CD{{\mathscr D}}
\def\CF{{\mathscr F}}
\def\CG{{\mathscr G}}
\def\CJ{{\mathscr J}}
\def\CV{{\mathscr V}}
\def\CX{{\mathscr X}}
\newcommand*\bigcdot{\mathpalette\bigcdot@{.5}}
\newcommand*\bigcdot@[2]{\mathbin{\vcenter{\hbox{\scalebox{#2}{$\m@th#1\bullet$}}}}}
\begin{document}
\author{Kevin Coulembier, Ross Street and Michel van den Bergh\footnote{The authors gratefully acknowledge the support of Australian Research Council Discovery Grants DE170100623 and DP190102432.}}

\title{Freely adjoining monoidal duals}
\date{\small{\today}}
\maketitle

\noindent {\small{\emph{2010 Mathematics Subject Classification:} 18D10; 18D35; 18A40; 16T99; 47A07; 52C05}
\\
{\small{\emph{Key words and phrases:} autonomisation; monoidal dual; string diagram; adjunction; biadjoint}}

\begin{abstract}
\noindent Given a monoidal category $\CC$ with an object $J$, we construct a monoidal category $\CC[J^{\vee}]$ by freely adjoining a right dual $J^{\vee}$ to $J$. We show that the canonical strong monoidal functor $\Omega : \CC\to \CC[J^{\vee}]$ provides the unit for a biadjunction with the forgetful 2-functor from the 2-category of monoidal categories with a distinguished dual pair to the 2-category of monoidal categories with a distinguished object. We show that $\Omega : \CC\to \CC[J^{\vee}]$ is fully faithful and provide coend formulas
for homs of the form $\CC[J^{\vee}](U,\Omega A)$ and $\CC[J^{\vee}](\Omega A,U)$ for $A\in \CC$ and $U\in \CC[J^{\vee}]$.

If $\mathbb{N}$ denotes the free strict monoidal category on a single generating
object $1$ then $\mathbb{N}[1^{\vee}]$ is the free monoidal category $\mathrm{Dpr}$ containing a dual pair
$- \dashv +$ of objects. As we have the monoidal pseudopushout $\CC[J^{\vee}] \simeq \mathrm{Dpr} +_{\mathbb{N}} \CC$, it is of interest to have an explicit model of $\mathrm{Dpr}$: we provide both geometric
and combinatorial models. We show that the (algebraist's) simplicial category $\Delta$ is a monoidal 
full subcategory of $\mathrm{Dpr}$ and explain the relationship with the free 2-category 
$\mathrm{Adj}$ containing an adjunction. We describe a generalization of $\mathrm{Dpr}$ which includes, for example, a combinatorial model $\mathrm{Dseq}$ for the free monoidal category containing a duality sequence $X_0\dashv X_1\dashv X_2 \dashv \dots$ of objects. 
Actually, $\mathrm{Dpr}$ is a monoidal full subcategory of $\mathrm{Dseq}$.     
\end{abstract}

\section*{Introduction}

From the higher categorical point of view, dual objects in monoidal categories are a special case
of adjoint morphisms in bicategories and so the concept basically goes back to the counit-unit
definition of adjunction due to Kan \cite{Kan1958}.
The special case was made explicit for different uses by Kelly \cite{Kel1972} and Saavedra Rivano \cite{NSR1972}.
A symmetric monoidal category in which every object has a dual, Kelly called {\em compact} and
Saavedra Rivano called {\em rigid}. Much later, without the symmetry requirement and so requiring both
a left and right duals, Joyal-Street \cite{xii} used the term {\em autonomous}. 

This project began by our seeking a construction which freely adjoined dual objects to a given monoidal category $\CC$ with some chosen objects to have the duals. The existence was not in doubt.
We wanted the construction to be sufficiently explicit for us to prove various properties of it
and its relationship to $\CC$. 
While considering this, we came across the work of A. Delpeuch \cite{Delp2019} in which he uses the
string diagrams as in the
unpublished work \cite{xii} of A. Joyal and the second author to construct the free autonomous (rigid) monoidal
category $\mathrm{Auton}\CC$ on $\CC$: all objects are given duals. 
He proves that the canonical functor $\CC\to \mathrm{Auton}\CC$ is fully faithful.
This was also one of our observations. 

Since our constructions and techniques are quite different from those of \cite{Delp2019}, we feel they
should be of independent interest. Moreover, we deal with situations where fewer duals are adjoined
than in $\mathrm{Auton}\CC$ and we instigate the study of general Hom sets in such categories. We shall now describe the contents of this paper in more detail. 

After a brief review in Section~\ref{dp} of duality for objects in a monoidal category and 
associated string diagrams, we describe in Section~\ref{gmDpr}
a geometric model of the free monoidal category $\mathrm{Dpr}$ containing a dual pair $-\dashv +$. 
The objects are words in the symbols $-$ and $+$ while the morphisms are string diagrams. We
should think of these morphisms as in normal form since, when the string diagrams are composed via
vertical stacking, the result must be reduced to normal form by tugging (that is, by applying the snake identities).

The goal of Section~\ref{CdDpr} is to present a non-skeletal combinatorial model of $\mathrm{Dpr}$.
The objects are pairs $(M,S)$ where $M$ is a finite linearly ordered set and $S$ is a subset;
we think of $M$ as the set of positions for the letters $-$ and $+$ in the word and $S$ tells where there
is a $+$. Morphisms $(A,B) : (M,S) \to (N,T)$ consist of subsets $A\subseteq M$ and $B\subseteq N$,
which tell the positions in the domain that are joined to positions in the codomain by a string;
there are conditions on $(A,B)$.    

In Section~\ref{Rfa}, we show that the (algebraist's) simplicial category $\Delta$ is a monoidal 
full subcategory of $\mathrm{Dpr}$ and explain the relationship with the free 2-category 
$\mathrm{Adj}$ containing an adjunction.
   
As an interlude, in Section~\ref{Ioid}, we generalize the construction of $\mathrm{Dpr}$ to include, for example, a combinatorial model $\mathrm{Dseq}$ for the free monoidal category containing a duality sequence $X_0\dashv X_1\dashv X_2 \dashv \dots$ of objects. 

Section~\ref{ConstructionC[J*]} constructs the free monoidal category $\CC[J^{\vee}]$ adjoining a
right dual to a given object $J$ in a monoidal category $\CC$.
Section~\ref{UniversalityC[J*]} proves the universal property. 
Sections~\ref{Omegafullsection} proves that the canonical strong monoidal functor $\Omega : \CC \to \CC[J^{\vee}]$ is full by appealing to the details of the construction.
Sections~\ref{Omegafaithfulsection} proves that $\Omega : \CC \to \CC[J^{\vee}]$ is faithful by using a more transcendental argument.
Section~\ref{coends} provides formulas for some homs in $\CC[J^{\vee}]$ other than those coming from $\CC$. In Section~\ref{RemPushN}, we demonstrate that adjoining a dual corresponds to taking a pseudopushout with Dpr. In Section~\ref{Klinear} we explain how our results extend to $K$-linear categories. In Section~\ref{Auton} we discuss the link between our results and the principle of autonomisation.

\tableofcontents

\section{Dual pairs}\label{dp}

For the definition of monoidal category $\CV$ (also called tensor category) see \cite{EilKel1966, CWM, 37}. We use $\mathbb{I}$ for the tensor unit and $\ox$ for the binary tensor product. 
We will make use of the fact that every monoidal category is monoidally equivalent to a strict monoidal category (that is, one in which the associativity and unitality isomorphisms are identities).

For a linearly ordered set $M = \{m_1< m_2<\dots < m_k\}$ and $X_m\in \CV$ for all $m\in M$, we put 
$$\bigotimes_{m\in M} X_m = X_{m_1}\ox X_{m_2}\ox \dots \ox X_{m_k} \ .$$ 

Duality for objects in $\CV$ was defined in \cite{NSR1972, Kel1972, xii}.
We will make use of the string diagrams for monoidal categories as explained in \cite{xii, 37}
except that we will read from top to bottom rather than left to right or bottom to top.   

A {\em right dual} for an object $X$ of $\CV$ is an object $Y$ equipped with morphisms
$\varepsilon : X\otimes Y \to \mathbb{I}$ and $\eta : \mathbb{I} \to Y\otimes X$, called {\em counit} and {\em unit},
such that the composites 
\begin{eqnarray}\label{radj}
X\xra{X\ox \eta}X\ox Y\ox X \xra{\varepsilon \ox X}X  \ \text{ and } \  Y\xra{\eta \ox Y}Y\ox X\ox Y \xra{Y\ox \varepsilon}Y
\end{eqnarray}
are identities. As this is a special case of adjunction in a bicategory, we use the notation $X\dashv Y$ and call this a {\em dual pair}. Here are the string diagrams expressing that the composites in \eqref{radj} are identities.

\begin{eqnarray}\label{snakecon}
\begin{aligned}
\psscalebox{0.6 0.6} 
{
\begin{pspicture}(0,-2.8526227)(17.039299,2.8526227)
\pscircle[linecolor=black, linewidth=0.04, dimen=outer](2.9392998,0.44737723){0.32}
\pscircle[linecolor=black, linewidth=0.04, dimen=outer](11.3793,0.40737724){0.32}
\pscircle[linecolor=black, linewidth=0.04, dimen=outer](1.1792998,-1.0126227){0.32}
\pscircle[linecolor=black, linewidth=0.04, dimen=outer](13.4593,-1.0326228){0.32}
\rput[bl](4.5793,-0.43262276){\LARGE${=}$}
\rput[bl](15.3193,-0.61262274){\LARGE${=}$}
\rput[bl](13.3993,-1.1726228){\large${\varepsilon}$}
\rput[bl](1.1392999,-1.1526227){\large${\varepsilon}$}
\rput[bl](2.8792999,0.28737724){\large${\eta}$}
\rput[bl](11.2993,0.26737726){\large${\eta}$}
\psline[linecolor=black, linewidth=0.04](0.01929985,2.8473773)(0.9992998,-0.7526228)
\psline[linecolor=black, linewidth=0.04](1.3792999,-0.77262276)(2.7192998,0.20737724)(2.7392998,0.18737724)
\psline[linecolor=black, linewidth=0.04](3.0993,0.18737724)(4.4593,-2.7926228)
\psline[linecolor=black, linewidth=0.04](6.2792997,2.8073773)(6.3192997,-2.8126228)
\psline[linecolor=black, linewidth=0.04](11.1793,0.14737724)(10.1993,-2.8126228)
\psline[linecolor=black, linewidth=0.04](11.599299,0.18737724)(13.2193,-0.89262277)
\psline[linecolor=black, linewidth=0.04](13.6993,-0.8126228)(15.1593,2.7473772)
\psline[linecolor=black, linewidth=0.04](17.0193,2.7673771)(17.0193,-2.8526227)
\rput[bl](5.8,2.5473773){$X$}
\rput[bl](0.31929985,2.4673772){$X$}
\rput[bl](14.5,2.3873773){$Y$}
\rput[bl](3.6193,-2.5126228){$X$}
\rput[bl](1.5592998,-0.15262276){$Y$}
\rput[bl](16.6,2.3673773){$Y$}
\rput[bl](11.7793,-0.67262274){$X$}
\rput[bl](10.5192995,-2.6126227){$Y$}
\end{pspicture}}
\end{aligned}
\end{eqnarray}

When there is no ambiguity, we denote counits by cups $\cup$ and units by caps $\cap$. So \eqref{radj} becomes the more geometrically ``obvious'' operation of pulling the ends of the strings as in \eqref{radjobv}.
These are sometimes called the {\em snake equations}.
\begin{eqnarray}\label{radjobv}
\begin{aligned}
\psscalebox{0.6 0.6} 
{
\begin{pspicture}(0,-2.72)(15.8,2.72)
\pscustom[linecolor=black, linewidth=0.04]
{
\newpath
\moveto(2.04,1.74)
\lineto(1.98,0.98999995)
\curveto(1.95,0.61499995)(1.935,-0.050000038)(1.95,-0.34000003)
\curveto(1.965,-0.63000005)(2.02,-0.97)(2.06,-1.02)
\curveto(2.1,-1.07)(2.18,-1.1500001)(2.22,-1.1800001)
\curveto(2.26,-1.21)(2.43,-1.255)(2.56,-1.27)
\curveto(2.69,-1.2850001)(2.905,-1.27)(2.99,-1.24)
\curveto(3.075,-1.21)(3.2,-1.1500001)(3.24,-1.12)
\curveto(3.28,-1.09)(3.355,-0.87500006)(3.39,-0.69000006)
\curveto(3.425,-0.50500005)(3.495,-0.22000004)(3.53,-0.120000035)
\curveto(3.565,-0.020000039)(3.65,0.13499996)(3.7,0.18999997)
\curveto(3.75,0.24499996)(3.98,0.28999996)(4.16,0.27999997)
\curveto(4.34,0.26999995)(4.57,0.22499996)(4.62,0.18999997)
\curveto(4.67,0.15499996)(4.755,0.0149999615)(4.79,-0.09000004)
\curveto(4.825,-0.19500004)(4.89,-0.65000004)(4.92,-1.0)
\curveto(4.95,-1.35)(5.01,-1.77)(5.04,-1.84)
\curveto(5.07,-1.9100001)(5.14,-2.16)(5.26,-2.7)
}
\pscustom[linecolor=black, linewidth=0.04]
{
\newpath
\moveto(6.98,1.74)
\lineto(7.03,0.84)
\curveto(7.055,0.38999996)(7.06,-0.44500005)(7.04,-0.83000004)
\curveto(7.02,-1.215)(7.0,-1.88)(7.0,-2.72)
}
\pscustom[linecolor=black, linewidth=0.04]
{
\newpath
\moveto(9.64,-2.64)
\lineto(9.69,-2.19)
\curveto(9.715,-1.965)(9.775,-1.5300001)(9.81,-1.32)
\curveto(9.845,-1.11)(9.905,-0.68000007)(9.93,-0.46000004)
\curveto(9.955,-0.24000004)(10.025,0.009999962)(10.07,0.03999996)
\curveto(10.115,0.06999996)(10.2,0.12999997)(10.24,0.15999997)
\curveto(10.28,0.18999997)(10.575,0.11999996)(10.83,0.019999962)
\curveto(11.085,-0.080000035)(11.37,-0.37000003)(11.4,-0.56000006)
\curveto(11.43,-0.75000006)(11.49,-1.09)(11.52,-1.24)
\curveto(11.55,-1.39)(11.67,-1.585)(11.76,-1.63)
\curveto(11.85,-1.6750001)(12.035,-1.735)(12.13,-1.75)
\curveto(12.225,-1.765)(12.365,-1.745)(12.41,-1.71)
\curveto(12.455,-1.6750001)(12.54,-1.61)(12.58,-1.58)
\curveto(12.62,-1.5500001)(12.7,-1.485)(12.74,-1.45)
\curveto(12.78,-1.4150001)(12.845,-1.1500001)(12.87,-0.92)
\curveto(12.895,-0.69000006)(12.955,-0.37500003)(12.99,-0.29000005)
\curveto(13.025,-0.20500004)(13.09,0.35499996)(13.18,1.78)
}
\pscustom[linecolor=black, linewidth=0.04]
{
\newpath
\moveto(15.02,1.74)
\lineto(15.08,0.71999997)
\curveto(15.11,0.20999996)(15.13,-0.56000006)(15.12,-0.82000005)
\curveto(15.11,-1.08)(15.085,-1.575)(15.07,-1.8100001)
\curveto(15.055,-2.045)(15.055,-2.365)(15.1,-2.6200001)
}
\rput[bl](5.54,-0.74){\LARGE${=}$}
\rput[bl](13.46,-0.78000003){\LARGE${=}$}
\rput[bl](1.34,1.4599999){$X$}
\rput[bl](6.3,1.38){$X$}
\rput[bl](12.7,1.38){$Y$}
\rput[bl](2.97,-0.64000005){$Y$}
\rput[bl](10.88,-0.86){$X$}
\rput[bl](15.16,1.38){$Y$}
\rput[bl](4.56,-2.28){$X$}
\rput[bl](9.2,-2){$Y$}
\end{pspicture}
}
\end{aligned}
\end{eqnarray}

Dualities tensor. If $X\dashv Y$ and $Z\dashv W$ then $X\ox Z\dashv W\ox Y$; 
the counit and unit are the nested cups and nested caps as shown in \eqref{adjcomp}.

\begin{eqnarray}\label{adjcomp}
\begin{aligned}
\psscalebox{0.6 0.6} 
{
\begin{pspicture}(0,-0.8380139)(8.52,0.8380139)
\psbezier[linecolor=black, linewidth=0.04](1.02,0.47792357)(1.02,-0.3220764)(2.42,-0.3220764)(2.42,0.477923583984375)
\psbezier[linecolor=black, linewidth=0.04](0.32,0.47792357)(0.32,-1.1220764)(3.1,-1.1620765)(3.1,0.437923583984375)
\psbezier[linecolor=black, linewidth=0.04](7.4002223,-0.3917126)(7.3882093,0.4081972)(5.988367,0.38717374)(6.0003805,-0.412736061473463)
\psbezier[linecolor=black, linewidth=0.04](8.100143,-0.38120085)(8.076117,1.2186188)(5.29583,1.2168676)(5.319856,-0.38295196956351446)
\rput[bl](0.0,0.5379236){$X$}
\rput[bl](2.84,0.5179236){$Y$}
\rput[bl](0.74,0.5379236){$Z$}
\rput[bl](2.1,0.5179236){$W$}
\rput[bl](7.16,-0.76207644){$X$}
\rput[bl](5.8,-0.7420764){$Y$}
\rput[bl](7.84,-0.76207644){$Z$}
\rput[bl](5.02,-0.7220764){$W$}
\end{pspicture}
}\end{aligned}
\end{eqnarray}
In particular, if $X\dashv Y$ then $X^{\ox n}\dashv Y^{\ox n}$ with counit and unit
given by $n$ nested cups and $n$ nested caps.

If $X\dashv Y$ and $A\dashv B$ in $\CV$, recall that morphisms $A\to X$ and $Y\to B$ are {\em mates} if they are exchanged under the bijection $\CV(A,X)\cong \CV(Y,B)$.

We remind the reader that duals invert.

\begin{proposition}[\cite{NSR1972, xii, DayPas2008}]
Suppose $X\dashv Y$ is a dual pair in a monoidal category $\CA$.
If $\sigma : S\Ra T : \CA \to \CV$ is a monoidal natural 
transformation between strong
monoidal natural transformations then the components $\sigma_X$ and $\sigma_Y$
of $\sigma$ are invertible. Indeed, $\sigma_{X}^{-1}$ is the mate of $\sigma_Y$
and $\sigma_{Y}^{-1}$ is the mate of $\sigma_X$ under the dualities $SX\dashv SY$ and $TX\dashv TY$.  
\end{proposition}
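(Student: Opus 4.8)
The plan is to exhibit explicit two-sided inverses built as mates, reducing everything to the snake identities of \eqref{radj}. First I would record that strong monoidal functors preserve dual pairs: since $S$ and $T$ are strong monoidal, one has $SX\dashv SY$ and $TX\dashv TY$, with counit $\varepsilon^S$ and unit $\eta^S$ obtained by conjugating $S\varepsilon$ and $S\eta$ with the (invertible) structure constraints of $S$, and similarly $\varepsilon^T,\eta^T$ for $T$. Invoking the strictification remark of Section~\ref{dp} I would suppress the associativity and unit constraints of $\CA$ and $\CV$, but of course keep the constraints of $S$ and $T$, since these functors are only assumed strong.

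The key input is a pair of compatibility identities expressing that $\sigma$ intertwines the duality data of $S$ and $T$, namely
\[
\varepsilon^T\circ(\sigma_X\ox\sigma_Y)=\varepsilon^S \quad\text{and}\quad (\sigma_Y\ox\sigma_X)\circ\eta^S=\eta^T .
\]
Each follows from a short diagram chase combining the naturality of $\sigma$ at the morphisms $\varepsilon:X\ox Y\to\mathbb{I}$ and $\eta:\mathbb{I}\to Y\ox X$ with the two axioms making $\sigma$ a \emph{monoidal} natural transformation (compatibility with $\ox$ and with the unit constraints). I expect this to be the main obstacle, in the precise sense that it is the only place where the monoidal structure of $\sigma$ is genuinely used and where the bookkeeping of the functor constraints must be done carefully; once these identities are established everything else is formal.

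With these in hand, set $\mu:TX\to SX$ to be the mate of $\sigma_Y$, that is $\mu=(\varepsilon^T\ox SX)\circ(TX\ox\sigma_Y\ox SX)\circ(TX\ox\eta^S)$. To prove $\mu\circ\sigma_X=\mathrm{id}_{SX}$ I would slide $\sigma_X$ into the composite using the interchange law so that the pair $\sigma_X\ox\sigma_Y$ appears adjacent to $\varepsilon^T$; applying the first compatibility identity replaces $\varepsilon^T\circ(\sigma_X\ox\sigma_Y)$ by $\varepsilon^S$, leaving exactly $(\varepsilon^S\ox SX)\circ(SX\ox\eta^S)$, which is $\mathrm{id}_{SX}$ by the first snake identity for $SX\dashv SY$. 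Dually, to get $\sigma_X\circ\mu=\mathrm{id}_{TX}$ I would slide the outer $\sigma_X$ inward so that $(\sigma_Y\ox\sigma_X)\circ\eta^S$ appears, rewrite it as $\eta^T$ via the second compatibility identity, and conclude by the snake identity for $TX\dashv TY$. Hence $\sigma_X$ is invertible with $\sigma_X^{-1}=\mu$, the mate of $\sigma_Y$.

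Finally, the assertion for $\sigma_Y$ is entirely symmetric: taking $\nu:TY\to SY$ to be the mate of $\sigma_X$ and repeating the two chases with the roles of the left- and right-hand snake identities interchanged yields $\nu\circ\sigma_Y=\mathrm{id}_{SY}$ and $\sigma_Y\circ\nu=\mathrm{id}_{TY}$, so $\sigma_Y^{-1}=\nu$ is the mate of $\sigma_X$. Alternatively, once $\sigma_X$ is known to be invertible one can deduce the statement for $\sigma_Y$ directly from the functoriality of the mate correspondence under the dualities $SX\dashv SY$ and $TX\dashv TY$.
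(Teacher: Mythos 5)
Your proof is correct and complete. Note that the paper itself offers no proof of this proposition --- it is stated as a known fact with citations to \cite{NSR1972, xii, DayPas2008} --- so there is no internal argument to compare against; your route (transport the dualities along the strong monoidal functors, establish the two compatibility identities $\varepsilon^T\circ(\sigma_X\ox\sigma_Y)=\varepsilon^S$ and $(\sigma_Y\ox\sigma_X)\circ\eta^S=\eta^T$ from naturality of $\sigma$ at $\varepsilon,\eta$ together with the two monoidality axioms, then conclude via the snake equations) is precisely the standard argument from those references, and every step you sketch, including the mate formula $\mu=(\varepsilon^T\ox SX)\circ(TX\ox\sigma_Y\ox SX)\circ(TX\ox\eta^S)$ and the interchange manipulations, checks out.
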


A morphism $(u,v) : (X, Y, \varepsilon, \eta)\to (X_1, Y_1, \varepsilon_1, \eta_1)$
of dual pairs in $\CV$ consists of morphisms $u : X\to X_1$ and $v : Y_1\to Y$
in $\CV$ such that $\varepsilon_1\circ (u\ox 1) = \varepsilon \circ (1\ox v)$; see \eqref{mordupr}. In fact, $u$ and $v$ are mates under the dualities so determine each other.
This defines a monoidal category $\mathrm{Dp}\CV$ of dual pairs in $\CV$.
\begin{eqnarray}\label{mordupr}
\begin{aligned}
\psscalebox{0.6 0.6} 
{
\begin{pspicture}(0,-1.1304754)(5.28,1.1304754)
\psellipse[linecolor=black, linewidth=0.04, dimen=outer](0.29,-0.24047536)(0.29,0.29)
\psellipse[linecolor=black, linewidth=0.04, dimen=outer](4.73,-0.24047536)(0.29,0.29)
\psbezier[linecolor=black, linewidth=0.04](3.4,-0.49047536)(3.4,-1.2904754)(4.78,-1.2904754)(4.78,-0.4904753494262695)
\psbezier[linecolor=black, linewidth=0.04](0.28,-0.51047534)(0.28,-1.3104753)(1.66,-1.3104753)(1.66,-0.5104753494262695)
\rput[bl](2.22,-0.11047535){\LARGE${=}$}
\psline[linecolor=black, linewidth=0.04](1.66,-0.5304754)(1.74,1.1295247)
\psline[linecolor=black, linewidth=0.04](0.28,0.009524651)(0.26,1.1095246)
\psline[linecolor=black, linewidth=0.04](3.4,-0.5304754)(3.36,1.1295247)
\psline[linecolor=black, linewidth=0.04](4.76,0.02952465)(4.74,1.1295247)
\rput[bl](0.12,-0.33047536){\Large${u}$}
\rput[bl](4.60,-0.35047534){\Large${v}$}
\end{pspicture}
}
\end{aligned}
\end{eqnarray}

A monoidal category is called {\em autonomous} when every object has a left and a right dual.
The terms {\em rigid {\em and} compact} are all used in the literature to mean the existence of duals. 
The free autonomous monoidal category on any autonomous tensor scheme was described geometrically in \cite{xii}.
The free autonomous monoidal category on any monoidal category was described geometrically in \cite{Delp2019}.
Here we are interested in combinatorial descriptions of related free monoidal structures. 

\section{Geometric model of $\mathrm{Dpr}$}\label{gmDpr}
Our interest here is in the free monoidal category $\mathrm{Dpr}$ containing a duality pair of objects.
A presentation of this in terms of tensor schemes (in the sense of \cite{37})
takes the generating tensor scheme to have two objects $-$ and $+$
and two morphisms $\varepsilon : - + \to \varnothing$ and 
$\eta : \varnothing \to +-$ and subjects them to the snake equations.
The objects of $\mathrm{Dpr}$ are therefore elements of the free monoid
$\{-,+\}^*$ on the two symbols $-$ and $+$. 
As strings, each morphism consists of a sequence of nested cups on the domain word,
a sequence of nested caps on the codomain word, and lines between the remaining
symbols in the words without any crossings of lines, cups or caps.
The cups in the domain are directed from $-$ to $+$, the caps in the codomain are directed from $+$ to $-$, and the lines are directed either from a $-$ in the domain to a $-$ in the codomain or from a $+$ in the codomain to a $+$ in the domain.
Diagram \eqref{Duex1} is an example of a morphism from $--+++-+-$ to $+++---$; diagrams \eqref{Duex2}
and \eqref{Duex3} give other examples.
\begin{eqnarray}\label{Duex1}
\begin{aligned}
\psscalebox{0.8 0.8} 
{
\begin{pspicture}(0,-1.9817871)(9.08,1.9817871)
\psbezier[linecolor=black, linewidth=0.04](0.9,1.6941504)(0.9,0.8941504)(2.3,0.8941504)(2.3,1.694150390625)
\psbezier[linecolor=black, linewidth=0.04](0.2,1.6941504)(0.2,0.094150394)(2.98,0.05415039)(2.98,1.654150390625)
\psbezier[linecolor=black, linewidth=0.04](7.0802226,-1.6154858)(7.068209,-0.815576)(5.668367,-0.83659947)(5.6803803,-1.636509254832838)
\psbezier[linecolor=black, linewidth=0.04](7.7801437,-1.604974)(7.756117,-0.0051544686)(4.9758296,-0.0069055757)(4.9998565,-1.6067251629228894)
\rput[bl](8.44,1.7141504){$-$}
\rput[bl](2.08,1.7141504){$+$}
\rput[bl](5.46,1.6941504){$-$}
\rput[bl](8.4,-1.8858496){$-$}
\rput[bl](7.54,-1.9058496){$-$}
\rput[bl](6.88,-1.9058496){$-$}
\rput[bl](0.66,1.7541504){$-$}
\rput[bl](0.0,1.7541504){$-$}
\rput[bl](2.76,1.7141504){$+$}
\rput[bl](3.86,1.6941504){$+$}
\rput[bl](3.84,-1.8658496){$+$}
\rput[bl](4.8,-1.8658496){$+$}
\rput[bl](5.4,-1.8858496){$+$}
\rput[bl](6.86,1.6741503){$+$}
\psline[linecolor=black, linewidth=0.04](4.04,1.6341504)(4.0,-1.5258496)
\psline[linecolor=black, linewidth=0.04](8.64,1.6341504)(8.62,-1.5258496)
\psbezier[linecolor=black, linewidth=0.04](5.64,1.6141504)(5.64,0.8141504)(7.04,0.8141504)(7.04,1.614150390625)
\end{pspicture}
}
\end{aligned}
\end{eqnarray}
Composition is performed by vertical stacking followed by employing the snake
equations to straighten out the snaking into cups, caps and lines. For example,
the composable morphisms $$(-++---+--++) \lra (+--+---++-+) \lra (+-++---)$$ as shown
stacked vertically in \eqref{Duex2} have composite as shown in \eqref{Duex3}.
\begin{eqnarray}\label{Duex2}
\begin{aligned}
\psscalebox{0.8 0.8} 
{
\begin{pspicture}(0,-2.0517871)(10.02,2.0517871)
\rput[bl](0.0,1.8241504){$-$}
\rput[bl](0.72,1.8241504){$+$}
\rput[bl](2.42,1.8241504){$-$}
\rput[bl](3.16,1.8241504){$-$}
\rput[bl](3.88,1.8041503){$-$}
\rput[bl](5.42,1.8041503){$-$}
\rput[bl](6.18,1.8041503){$-$}
\rput[bl](2.38,-0.09584961){$-$}
\rput[bl](3.14,-0.09584961){$-$}
\rput[bl](4.7,-0.07584961){$-$}
\rput[bl](5.5,-0.07584961){$-$}
\rput[bl](6.28,-0.07584961){$-$}
\rput[bl](8.56,-0.07584961){$-$}
\rput[bl](2.26,-1.9558496){$-$}
\rput[bl](4.66,-1.9358497){$-$}
\rput[bl](5.42,-1.9358497){$-$}
\rput[bl](6.2,-1.9558496){$-$}
\rput[bl](1.56,1.8241504){$+$}
\rput[bl](4.56,1.8041503){$+$}
\rput[bl](7.02,1.8241504){$+$}
\rput[bl](9.16,1.8241504){$+$}
\rput[bl](1.54,-0.07584961){$+$}
\rput[bl](3.94,-0.07584961){$+$}
\rput[bl](7.0,-0.07584961){$+$}
\rput[bl](7.86,-0.07584961){$+$}
\rput[bl](9.3,-0.055849608){$+$}
\rput[bl](1.48,-1.9758496){$+$}
\rput[bl](2.98,-1.9558496){$+$}
\rput[bl](3.78,-1.9558496){$+$}
\psbezier[linecolor=black, linewidth=0.04](0.22,1.7241504)(0.22,0.9241504)(0.9,0.9241504)(0.9,1.724150390625)
\psline[linecolor=black, linewidth=0.04](1.78,1.7841504)(1.76,0.22415039)
\psline[linecolor=black, linewidth=0.04](2.66,1.7841504)(2.6,0.1841504)
\psline[linecolor=black, linewidth=0.04](3.36,1.7641504)(3.32,0.24415039)
\psbezier[linecolor=black, linewidth=0.04](4.12,1.7441504)(4.12,0.9441504)(4.8,0.9441504)(4.8,1.744150390625)
\psbezier[linecolor=black, linewidth=0.04](3.38,-0.13584961)(3.38,-0.9358496)(4.18,-0.9558496)(4.18,-0.155849609375)
\psbezier[linecolor=black, linewidth=0.04](8.759755,0.23706034)(8.729379,1.0364834)(8.049869,1.0106635)(8.080245,0.21124044211746423)
\psbezier[linecolor=black, linewidth=0.04](8.8,-0.07584961)(8.8,-0.8758496)(9.48,-0.8758496)(9.48,-0.075849609375)
\psbezier[linecolor=black, linewidth=0.04](5.7,-0.11584961)(5.7,-0.9158496)(8.04,-0.9558496)(8.04,-0.155849609375)
\psbezier[linecolor=black, linewidth=0.04](6.56,-0.09584961)(6.56,-0.71584964)(7.36,-0.4358496)(7.24,-0.095849609375)
\psbezier[linecolor=black, linewidth=0.04](4.939755,0.21706034)(4.9093785,1.0164834)(4.189869,1.0306635)(4.2202454,0.23124044211746878)
\psbezier[linecolor=black, linewidth=0.04](4.939755,-1.6229397)(4.979841,-1.2035166)(3.9520762,-1.1493365)(4.020245,-1.6487595578825311)
\psbezier[linecolor=black, linewidth=0.04](5.6797547,-1.6629397)(5.6493783,-0.86351657)(3.189869,-0.88933647)(3.2202451,-1.6887595578825358)
\psline[linecolor=black, linewidth=0.04](1.8,-0.17584962)(1.74,-1.6558496)
\psline[linecolor=black, linewidth=0.04](2.58,-0.13584961)(2.48,-1.6358496)
\psline[linecolor=black, linewidth=0.04](4.98,-0.1558496)(6.34,-1.5958496)
\psline[linecolor=black, linewidth=0.04](5.62,1.7641504)(5.64,0.26415038)
\psline[linecolor=black, linewidth=0.04](6.4,1.7641504)(6.48,0.26415038)
\psline[linecolor=black, linewidth=0.04](7.22,1.7441504)(7.24,0.2041504)
\psline[linecolor=black, linewidth=0.04](9.4,1.7241504)(9.44,0.3041504)
\end{pspicture}
}
\end{aligned}
\end{eqnarray}

\begin{eqnarray}\label{Duex3}
\begin{aligned}
\psscalebox{0.8 0.8} 
{
\begin{pspicture}(0,-1.3817871)(8.5,1.3817871)
\rput[bl](0.0,1.1541504){$-$}
\rput[bl](0.72,1.1541504){$+$}
\rput[bl](2.42,1.1541504){$-$}
\rput[bl](3.16,1.1541504){$-$}
\rput[bl](3.88,1.1341504){$-$}
\rput[bl](5.42,1.1341504){$-$}
\rput[bl](6.18,1.1341504){$-$}
\rput[bl](2.42,-1.2858496){$-$}
\rput[bl](4.82,-1.2658496){$-$}
\rput[bl](5.58,-1.2658496){$-$}
\rput[bl](6.36,-1.2858496){$-$}
\rput[bl](1.56,1.1541504){$+$}
\rput[bl](4.56,1.1341504){$+$}
\rput[bl](6.92,1.1341504){$+$}
\rput[bl](7.78,1.1341504){$+$}
\rput[bl](1.64,-1.3058496){$+$}
\rput[bl](3.14,-1.2858496){$+$}
\rput[bl](3.94,-1.2858496){$+$}
\psbezier[linecolor=black, linewidth=0.04](0.22,1.0541503)(0.22,0.2541504)(0.9,0.2541504)(0.9,1.054150390625)
\psline[linecolor=black, linewidth=0.04](1.76,1.0741504)(1.86,-0.96584964)
\psline[linecolor=black, linewidth=0.04](2.64,1.0941504)(2.64,-1.0058496)
\psline[linecolor=black, linewidth=0.04](3.36,1.0941504)(6.58,-0.96584964)
\psbezier[linecolor=black, linewidth=0.04](4.12,1.0741504)(4.3,0.71415037)(4.58,0.6341504)(4.8,1.074150390625)
\psbezier[linecolor=black, linewidth=0.04](5.039755,-0.99293965)(5.0798416,-0.57351655)(4.0520763,-0.51933646)(4.120245,-1.0187595578825313)
\psbezier[linecolor=black, linewidth=0.04](5.859755,-0.97293967)(5.8293786,-0.17351657)(3.3698688,-0.19933647)(3.4002452,-0.9987595578825358)
\psbezier[linecolor=black, linewidth=0.04](6.46,1.0541503)(6.64,0.6941504)(6.92,0.6141504)(7.14,1.054150390625)
\psbezier[linecolor=black, linewidth=0.04](5.66,1.0541503)(5.66,0.2541504)(8.0,0.2541504)(8.0,1.054150390625)
\end{pspicture}
}
\end{aligned}
\end{eqnarray}
\begin{remark}
The string diagrams we have described are in ``normal form''. 
Composition is done by vertical stacking and then moving to normal form.
Readers requiring a geometric model which accommodates the diagrams arising from
vertical stacking of the normal diagrams {\em per se} should see \cite{xii}. 
Winding numbers are involved.
\end{remark}
\begin{remark}
These string diagrams and their composition are much like those of the Temperley-Lieb
algebra \cite{TempLieb} except that their directed nature prohibits the creation of loops. 
They are also particularly non-tangled tangles as occurring in \cite{Yet1988, Shum1994}.
\end{remark}

\section{Combinatorial definition of $\mathrm{Dpr}$}\label{CdDpr}
A subset $K$ of an ordered set $M$ is an {\em interval} when 
$a\le b\le c$ and $a,c\in K$ imply $b\in K$.
For $a, c\in M$, we have the interval $[a,c] = \{b \in M : a\le b\le c \}$. 

Let $S$ be a subset of a linearly ordered set $M$ and let $K$ be a finite interval in $M$.
We say that $K$ is an {\em $S$-cup} [respectively, {\em $S$-cap}] when $K$ has even
cardinality and $S\cap K$ is a final [respectively, initial] segment of $K$ containing exactly half of the elements of $K$. Notice that, if $K$ and $L$ are both $S$-cups or both $S$-caps and 
$K\cap L\neq \varnothing$, then either $K\subseteq L$ or $L\subseteq K$. 

Suppose $M$ and $S$ are as above and $a\in M$. We write $a^{\cup}$ for the element of $M$ 
for which the interval $[a,a^{\cup}]$ in $M$ is an $S$-cup. 
Such an element $a^{\cup}$ may not exist but it will if $M$ is a union of $S$-cups and $a\notin S$.
Similarly, we write $a^{\cap}$ for the element of $M$ 
for which the interval $[a,a^{\cap}]$ in $M$ is an $S$-cap. 

\begin{Example} If $M=\{m_1<m_2<m_3<m_4\}$, $S = \{m_3, m_4\}$ then $m_1^{\cup}=m_4$ and $m_2^{\cup}=m_3$.  
\end{Example}

Suppose $H\subseteq M$ and $x, x'\in M$. A sequence 
$$x=x_0, x_1, \dots, x_{n}=x'$$ is said to be {\em snaking in} $H$ out of $x$ into $x'$ when 
 $x_i\in H$ for $0 < i < n$ and either $x_{m+1}= x_{m}^{\cup}$ or $x_{m+1}= x_{m}^{\cap}$ for $0 \le m < n$. We write $x\rightsquigarrow_H x'$ when such a sequence exists.

If $M$ and $N$ are linearly ordered sets and $A\subseteq M$, $B\subseteq N$ are subsets
of the same cardinality $\#A=\#B$ then there is a unique order-preserving bijection $A\to B$;
we call $a\in A$ and $b\in B$ {\em cobbers} when they correspond under this bijection.     

We now describe a monoidal category $\mathrm{Dpr}$ which is equivalent to the geometric version of Section~\ref{gmDpr}. 
An object $(M,S)$ consists of a finite
linearly ordered set $M$ and a subset $S\subseteq M$. 
Of course, we can also think of such an object as a function 
$\chi_S : M\to \{-.+\}$ for which $S$ is the inverse image of $+$. 

A morphism $(A,B) : (M,S) \to (N,T)$ in $\mathrm{Dpr}$ consists of subsets $A\subseteq M$ and $B\subseteq N$ such that 
\begin{itemize}
\item[(i)] $\#A=\#B$,
\item[(ii)] for all cobbers $a\in A$ and $b\in B$, $a\in S$ if and only if $b\in T$,
\item[(iii)] $A' = M\backslash A$ is a union of $S$-cup intervals in $M$,
\item[(iv)] $B'=N\backslash B$ is a union of $T$-cap intervals in $N$.

\end{itemize}
The identity morphism of $(M,S)$ is $(M,M) : (M,S)\to (M,S)$. 

The composite 
$$(M,S) \xra{(E,F)} (P,U) = ((M,S) \xra{(A,B)} (N,T) \xra{(C,D)} (P,U))$$ is defined by
\begin{eqnarray*}
E = \{x \in A : \ \text{the cobber} \ y\in B \ \text{of} \ x \ \text{has} \ y\rightsquigarrow_{B'\cap C'}y' \ \text{with} \ y'\in C \} \\ 
\ \cup \ \{x' \in A : \ \text{the cobber} \ y'\in B \ \text{of} \ x' \ \text{has} \ y\rightsquigarrow_{B'\cap C'} y' \ \text{with} \ y\in C \} 
\end{eqnarray*}
and
\begin{eqnarray*}
F = \{z \in D : \ \text{the cobber} \ y\in C \ \text{of} \ z \ \text{has} \ y\rightsquigarrow_{B'\cap C'}y'  \ \text{with} \ y'\in B \} \\ 
\ \cup \ \{z' \in D : \ \text{the cobber} \ y'\in C \ \text{of} \ z' \ \text{has} \ y\rightsquigarrow_{B'\cap C'}y' \ \text{with} \ y\in B \} \ . 
\end{eqnarray*}

The intersection of the two sets whose union gives the definition of $E$ contains precisely those $x\in A$
whose cobber $y\in B$ is also in $C$.
The intersection of the two sets whose union is $F$ contains precisely those $z\in D$
whose cobber $y\in C$ is also in $B$.
The sets $E$ and $F$ have the same cardinality: the cobber in $F$ of an $x\in E$ in the first set of the union is the cobber in $D$ of the $y'\in C$ while the cobber in $F$ of an $x'\in E$ in the second set of the union is the cobber in $D$ of the $y\in C$. 

The complement $E'$ of $E$ is a union of $S$-cups; indeed, we have the disjoint union
\begin{eqnarray*}
E' = A'\cup \{x, x' \in A : \ \text{the cobbers} \ y, y'\in B \ \text{of} \ x, x' \ \text{have} \ y\rightsquigarrow_{B'\cap C'} y' \ \}  \  . 
\end{eqnarray*}  
Similarly,
\begin{eqnarray*}
F' = D'\cup \{z, z' \in D : \ \text{the cobbers} \ y, y'\in C \ \text{of} \ z, z' \ \text{have} \ y\rightsquigarrow_{B'\cap C'} y' \ \}  \  . 
\end{eqnarray*} 

\begin{proposition}\label{invmorinDu}
A morphism $(A,B) : (M,S) \to (N,T)$ in $\mathrm{Dpr}$ is invertible if and only if $A=M$, $B=N$.
 \end{proposition}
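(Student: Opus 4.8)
The plan is to prove the two implications separately, handling sufficiency by exhibiting an explicit inverse and necessity by extracting a simple monotonicity property built into the composition formula.

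For the ``if'' direction, suppose $A=M$ and $B=N$, so that $A'=B'=\varnothing$. Condition (i) then forces $\#M=\#N$, and conditions (i)--(ii) say precisely that the order isomorphism $M\to N$ carries $S$ onto $T$. I would propose $(N,M):(N,T)\to(M,S)$ as the inverse; checking that it satisfies (i)--(iv) is immediate since its two complementary sets are empty. The one computation to carry out is that the two composites reduce to the identities $(M,M)$ and $(N,N)$. The point is that in either composite the set $B'\cap C'$ appearing in the composition formula is empty (one of the two complements is $\varnothing$), so no snaking sequence can use an interior vertex; the length-zero sequence $y\rightsquigarrow_{\varnothing}y$ then witnesses membership, and since every cobber already lies in the relevant through-set, the formula returns $E$ and $F$ equal to the full domain and codomain sets. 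Hence both composites are identities and $(A,B)$ is invertible.

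For the ``only if'' direction, the key observation — which I expect to do essentially all the work — is structural rather than computational: by their very definition the composite sets satisfy $E\subseteq A$ and $F\subseteq D$, since each of the two sets whose union defines $E$ (respectively $F$) consists of elements of $A$ (respectively $D$). Now let $(C,D):(N,T)\to(M,S)$ be a two-sided inverse of $(A,B)$. Because morphisms are literally pairs of subsets, equality with an identity morphism holds componentwise. Applied to $(M,S)\xra{(A,B)}(N,T)\xra{(C,D)}(M,S)=\mathrm{id}_{(M,S)}=(M,M)$, this gives first component $E=M$, and together with $E\subseteq A\subseteq M$ it forces $A=M$. Applied to the reversed composite $(N,T)\xra{(C,D)}(M,S)\xra{(A,B)}(N,T)=(N,N)$, and reading off its second component (which in the formula is contained in the through-set $B$ of the outer morphism $(A,B)$), it forces $B=N$. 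Thus invertibility implies $A=M$ and $B=N$, and crucially both composites are needed, one for each equality.

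The main obstacle is not in the necessity argument, which is immediate once one records $E\subseteq A$ and $F\subseteq D$, but in the sufficiency verification: one must check with care that the composition formula genuinely returns the full through-sets when $B'\cap C'=\varnothing$, so that no spurious cups or caps are created by the straightening process and the composite really is the identity pair. Both implications together identify the invertible morphisms as exactly the marking-preserving order isomorphisms, matching the geometric picture in which an invertible string diagram has neither cups nor caps.
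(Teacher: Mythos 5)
Your proposal is correct. There is nothing to compare it against: the paper states Proposition~\ref{invmorinDu} with no proof at all, treating it as immediate from the definitions, so your argument supplies the missing verification, and it is the natural one. Both halves hold up. For sufficiency, $(N,M)$ is indeed a morphism (conditions (iii) and (iv) are vacuous because its complements are empty, and (i)--(ii) are inherited from $(M,N)$ via the inverse order isomorphism), and in either composite the set $B'\cap C'$ is empty, so only snaking sequences of length zero or one are available; the length-zero sequence --- which the definition does admit, its conditions being vacuous for $n=0$ --- already places every element of the first factor's first component in $E$ and every element of the second factor's second component in $F$, and since $E$ and $F$ can never exceed those two sets, both composites are exactly the identity pairs $(M,M)$ and $(N,N)$. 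For necessity, the containments $E\subseteq A$ and $F\subseteq D$ visible in the composition formula, together with the fact that equality of morphisms in $\mathrm{Dpr}$ is equality of pairs of subsets, give $A=M$ from $(C,D)\circ (A,B)=(M,M)$ and $B=N$ from $(A,B)\circ (C,D)=(N,N)$, and you are right that both composites are genuinely needed, one per equality. The only point I would make fully explicit in a final write-up is the admissibility of the length-zero snaking sequence, since the whole sufficiency computation rests on it; note the paper itself uses this tacitly when it asserts that the intersection of the two sets defining $E$ consists precisely of those $x\in A$ whose cobber lies in $C$.
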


The tensor product for $\mathrm{Dpr}$ is defined by componentwise ordinal sum which we denote by $+$.
So 
$$(M,S)\otimes (M_1,S_1) = (M+M_1, S+S_1) \ \text{and} \ (A,B)\otimes (A_1,B_1) = (A+A_1, B+B_1) \ .$$  
We write $\mathbf{n}$ for the ordinal $\{0, 1, \dots, n-1\}$.
The unit for this tensor product is $\mathbb{I} = (\mathbf{0}, \varnothing)$.  

We shall now distinguish a dual pair in $\mathrm{Dpr}$. 
We have two objects $- = (\mathbf{1},\varnothing)$
and $+ = (\mathbf{1},\{0\})$ of $\mathrm{Dpr}$. 
Then $-\ox + = (\mathbf{2},\{1\})$ and $+\ox - = (\mathbf{2},\{0\})$. 
Put $\varepsilon = (\varnothing, \varnothing) : -\ox + \to \mathbb{I}$
and $\eta = (\varnothing, \varnothing) : \mathbb{I}\to +\ox -$.
It is an easy exercise in the definition of composition in $\mathrm{Dpr}$ to
see that the composites \eqref{radj} (with $X=-$ and $Y=+$) are identities.
So $-\dashv +$.
 
Every object of $\mathrm{Dpr}$ is uniquely isomorphic to one of the form $(\mathbf{n}, S)$.
For every finite linearly ordered set $M$, we have $(M, \varnothing)\dashv (M,M)$
in $\mathrm{Dpr}$ by tensoring $-\dashv +$ $\#M$ times. 
 
Let us call {\em elementary} those morphisms in $\mathrm{Dpr}$ of the form 
$$(\varnothing,\varnothing) : (M,S) \to (N,T)$$
where $M$ and $N$ are not both empty.

\begin{proposition}\label{decompofgeneralmorph}
Every morphism of $\mathrm{Dpr}$ is uniquely of the form 
$$f_1\ox e_1\ox f_2 \ox e_2\ox \dots \ox e_{k-1}\ox f_k$$
where $f_1, \dots , f_k$ are invertible morphisms and 
$e_1, \dots , e_{k-1}$ are elementary morphisms.
\end{proposition}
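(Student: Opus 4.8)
The plan is to read the factorisation straight off the line structure of a morphism, verify by bookkeeping that the resulting ordinal sum reproduces it, and then obtain uniqueness from the fact that two adjacent factors of the same kind (or an empty internal invertible factor) would recombine.

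First I would fix a morphism $(A,B):(M,S)\to(N,T)$ and write $A=\{a_1<\dots<a_p\}$ and $B=\{b_1<\dots<b_p\}$, so that by (i) the element $a_j$ is the cobber of $b_j$. Putting $a_0=b_0=-\infty$ and $a_{p+1}=b_{p+1}=+\infty$, I define the domain strips $M_j=\{m\in M: a_j<m<a_{j+1}\}$ and the codomain strips $N_j=\{n\in N: b_j<n<b_{j+1}\}$ for $0\le j\le p$. Since the $a_j$ exhaust $A$ and the $b_j$ exhaust $B$, each $M_j$ lies in $A'=M\setminus A$ and each $N_j$ lies in $B'=N\setminus B$, and we have the ordinal-sum decompositions $M=M_0+\{a_1\}+M_1+\dots+\{a_p\}+M_p$ and $N=N_0+\{b_1\}+N_1+\dots+\{b_p\}+N_p$.

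Second, I would check that each strip is elementary and each line invertible. An $S$-cup occurring in the cover of $A'$ guaranteed by (iii) is an interval contained in $A'$, hence contains no $a_j$, hence (being an interval) straddles no $a_j$; so each such $S$-cup lies inside a single $M_j$, and as these $S$-cups cover $A'$ we get that each $M_j$ is a union of $S$-cups. Dually each $N_j$ is a union of $T$-caps by (iv). Thus whenever $M_j,N_j$ are not both empty, $(\varnothing,\varnothing):(M_j,S\cap M_j)\to(N_j,T\cap N_j)$ satisfies (i)–(iv) and is elementary; write $e'_j$ for it. Each line $\ell_j=(\{a_j\},\{b_j\})$ has $A$ equal to its whole domain and $B$ equal to its whole codomain, hence is invertible by Proposition~\ref{invmorinDu}. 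Because the tensor product is ordinal sum on both objects and morphisms, taking the ordinal sum of these pieces in order reproduces exactly $(A,B)$, giving the bookkeeping identity $(A,B)=e'_0\ox\ell_1\ox e'_1\ox\dots\ox\ell_p\ox e'_p$, in which an $e'_j$ with $M_j=N_j=\varnothing$ is the identity of $\mathbb{I}$ and may be deleted. Discarding these empty factors merges the maximal runs of consecutive lines separated only by empty strips into single invertible factors $f_i$ and leaves the nonempty strips as the elementary factors $e_i$, producing $f_1\ox e_1\ox\dots\ox e_{k-1}\ox f_k$, where $f_1$ or $f_k$ is the identity of $\mathbb{I}$ exactly when the morphism begins or ends with a nonempty strip.

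For uniqueness I would observe that any expression of the required shape induces, via ordinal sum, partitions of $M$ and of $N$ into consecutive blocks in which each invertible factor contributes a block lying entirely in $A$ (resp.\ $B$) and each elementary factor a block lying entirely in $A'$ (resp.\ $B'$). A tensor of elementary morphisms is again elementary and a tensor of invertibles again invertible, so two adjacent elementary factors, or an internal invertible factor equal to $\mathrm{id}_{\mathbb{I}}$, would recombine and contradict the strict alternation; this forces the internal factors to be nonempty, the block partition to break at every $A/A'$ boundary in the domain and every $B/B'$ boundary in the codomain, and (through the cobber bijection of (i)–(ii)) the domain and codomain breaks to correspond. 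These constraints pin the cuts down to exactly the strips constructed above, so the factors agree. I expect the main obstacle to be precisely this uniqueness bookkeeping around purely-cup and purely-cap elementary factors: such a factor is invisible in the domain (resp.\ codomain) partition, so the decomposition cannot be recovered from the $A/A'$ data on one side alone, and the alignment of the two partitions must be extracted from the cobber bijection together with (iii)–(iv); correctly handling the end factors $f_1,f_k$, which may legitimately be $\mathrm{id}_{\mathbb{I}}$, is the delicate point that makes the alternating form well defined.
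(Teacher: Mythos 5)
Your argument is correct and is in essence the paper's: both read the factorisation off the splitting of the domain into $A$ versus $A'$ and of the codomain into $B$ versus $B'$. Two differences are worth recording. First, the paper's proof simply takes the invertible factors to be the intervals $K_i$ of $M$ that are maximal as subsets of $A$ (and $L_i$ likewise in $N$) and asserts cobber-compatibility of that choice; read literally this is not quite right---for the morphism $--\,\to\,-+--$ consisting of two lines with a cap nested between them in the codomain, $A$ is a single interval of $M$ and yet must be cut into two invertible factors---so ``maximal'' has to be understood relative to adjacency in the domain \emph{and}, via cobbers, in the codomain simultaneously. Your finer lines-and-strips decomposition, followed by merging runs of lines separated only by strips that are empty on \emph{both} sides, builds in exactly this double adjacency, so your construction is the correct sharpening of the paper's. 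Second, the paper's proof says nothing at all about uniqueness, which you do address; your cut-forcing argument (breaks at every $A/A'$ and every $B/B'$ boundary, aligned through the cobber bijection, with no further breaks because each elementary factor must be genuinely elementary) is the right one. One caveat you should make explicit: uniqueness as literally stated fails, since an expression with an internal invertible factor equal to $1_{\mathbb{I}}$ is still of the stated shape---adjacent elementary factors tensor to an elementary morphism, so for instance $\varepsilon\ox\varepsilon$ admits both a decomposition with $k=2$ (itself as a single elementary factor) and one with $k=3$ (two copies of $\varepsilon$ separated by $1_{\mathbb{I}}$). Uniqueness holds only among expressions whose internal invertible factors are not $1_{\mathbb{I}}$; you impose this convention, correctly, but it is a necessary reading of the statement rather than a consequence of it.
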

\begin{proof}
Take any morphism $(A,B) : (M,S)\to (N,T)$ of $\mathrm{Dpr}$.
Then we can write $A = K_1+K_2+ \dots K_k$ and $B = L_1+L_2+ \dots L_k$
where the $K_i$ are intervals in $M$ which are maximal as subsets of $A$, the $L_j$ are intervals in $N$ which are maximal as subsets of $B$, and
if $a\in K_i$ in $A$ has cobber $b\in L_j$ in $B$ then $i=j$. 
For $1\le i < k$, let $E_i\subseteq A'$ consist of all the elements between $K_i$ and $K_{i+1}$
and let $F_i\subseteq B'$ consist of all the elements between $L_i$ and $L_{i+1}$;
not both are empty or else $K_i$ and $K_{i+1}$ or $L_i$ and $L_{i+1}$ would intersect.
Then we have $f_i : (K_i, S\cap K_i) \to (L_i, T\cap L_i)$ and $e_i : (E_i, S\cap E_i) \to (F_i, T\cap F_i)$ as desired.  
\end{proof}

For monoidal categories $\CA$ and $\CV$, write $\mathrm{StMon}_{\mathrm{g}}(\CA,\CV)$ 
for the groupoid of strong monoidal functors from $\CA$ to $\CV$ and monoidal natural isomorphisms. Write $\mathrm{Dp}_{\mathrm{g}}\CV$ for the subcategory of $\mathrm{Dp}\CV$ consisting of all the objects but only the invertible morphisms.

\begin{proposition}
For any monoidal category $\CV$, the functor
\begin{eqnarray*}
\mathrm{StMon}_{\mathrm{g}}(\mathrm{Dpr},\CV) \lra \mathrm{Dp}_{\mathrm{g}}\CV \ , \ \Phi \mapsto (\Phi -\dashv \Phi +)
\end{eqnarray*}
is an equivalence of groupoids.  
\end{proposition}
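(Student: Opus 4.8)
The statement is precisely a groupoid-level universal property of $\mathrm{Dpr}$, so the plan is to show that the functor in question — call it $\mathcal{R}$ — is well defined, faithful, full, and essentially surjective. Well-definedness is the easy part: a strong monoidal functor preserves dual pairs, so $(\Phi-,\Phi+,\Phi\varepsilon,\Phi\eta)$ really is a dual pair in $\CV$, and a monoidal natural isomorphism $\theta : \Phi\Rightarrow\Phi'$ has components $\theta_-,\theta_+$ which, by monoidal naturality tested against $\varepsilon$ and $\eta$, form an invertible morphism of dual pairs, i.e. an arrow of $\mathrm{Dp}_{\mathrm{g}}\CV$. Faithfulness is then immediate from the monoidal constraint: since every object of $\mathrm{Dpr}$ is a tensor product of copies of $-$ and $+$ (Section~\ref{CdDpr}) and $\theta$ is monoidal, the component of $\theta$ at any $(\mathbf{n},S)$ is forced to equal the corresponding tensor product of copies of $\theta_-$ and $\theta_+$; hence $\theta$ is determined by $\mathcal{R}(\theta)=(\theta_-,\theta_+)$.

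For fullness, I start from an isomorphism of dual pairs $(u,v):(\Phi-\dashv\Phi+)\to(\Phi'-\dashv\Phi'+)$, with $u:\Phi-\to\Phi'-$ and $v:\Phi'+\to\Phi+$ invertible mates, and define $\theta_{(\mathbf{n},S)}$ to be the tensor product over positions of $u$ at each $-$ and $v^{-1}$ at each $+$. Monoidality of $\theta$ holds by construction. Naturality need only be verified on a generating set of morphisms closed under $\otimes$ and $\circ$, and Proposition~\ref{decompofgeneralmorph} reduces this to the invertible morphisms and the elementary morphisms. On an invertible morphism the naturality square is a mere reindexing of positions (using Proposition~\ref{invmorinDu}), while every elementary morphism is a $\otimes$- and $\circ$-composite of copies of $\varepsilon$ and $\eta$, so naturality there reduces to the two squares for $\varepsilon$ and $\eta$, which are exactly the defining equation $\varepsilon'\circ(u\otimes 1)=\varepsilon\circ(1\otimes v)$ of a morphism of dual pairs and its mate.

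For essential surjectivity, given a dual pair $(X,Y,e,n)$ in $\CV$, I build a strong monoidal $\Phi:\mathrm{Dpr}\to\CV$ with $\Phi-=X$, $\Phi+=Y$, $\Phi\varepsilon=e$ and $\Phi\eta=n$. On objects I send $(\mathbf{n},S)$ to the tensor product of $X$'s and $Y$'s prescribed by $S$, with $\mathbb{I}\mapsto\mathbb{I}$ and the coherence constraints identities in the strict model of $\CV$ we may assume. On morphisms I use the normal form of Proposition~\ref{decompofgeneralmorph}: a morphism written $f_1\otimes e_1\otimes\cdots\otimes e_{k-1}\otimes f_k$ is sent to the tensor of the reindexings $\Phi f_i$ and the nested-cups-and-caps composites of $e$ and $n$ realising each $\Phi e_i$. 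That $\mathcal{R}(\Phi)$ recovers $(X\dashv Y)$ on the nose is then clear.

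The main obstacle is verifying that this $\Phi$ is functorial, that is, that it sends the combinatorial composite of Section~\ref{CdDpr} to the honest composite in $\CV$. This is exactly where the snake equations are needed: composition in $\mathrm{Dpr}$ is vertical stacking followed by straightening along the snaking relation $\rightsquigarrow_{B'\cap C'}$, and each elementary straightening step is one application of a snake identity. Since $X\dashv Y$ satisfies the snake identities \eqref{radj} in $\CV$, applying $\Phi$ commutes with each such step, so an induction on the number of snaking steps — equivalently on $\#(B'\cap C')$ — yields $\Phi((C,D)\circ(A,B))=\Phi(C,D)\circ\Phi(A,B)$. Granting this, $\mathcal{R}$ is essentially surjective, and together with the previous paragraphs it is an equivalence of groupoids.
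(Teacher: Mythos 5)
Your proposal is correct, and its core — building $\Phi$ from a dual pair $X\dashv Y$ by tensoring letters on objects, sending invertible morphisms to (re)indexing isomorphisms and elementary morphisms to the nested counits/units of $X^{\ox k}\dashv Y^{\ox k}$, with functoriality forced by Proposition~\ref{decompofgeneralmorph} and the snake equations — is exactly the paper's argument for producing a $\Phi$ hitting a given dual pair. The paper's proof essentially stops at that surjectivity step, so your additional verification of faithfulness and fullness (reducing naturality via the normal form to the $\varepsilon$ and $\eta$ squares, which become the defining equation $\varepsilon'\circ(u\ox 1)=\varepsilon\circ(1\ox v)$ of a morphism of dual pairs and its mate) supplies details the paper leaves implicit rather than taking a different route.
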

\begin{proof}
Take a dual pair $X\dashv Y$ in $\CV$.
Define $\mathrm{Dpr}\xra{\Phi}\CV$ on objects by
$$\Phi(M,S) = \bigotimes_{m\in M}Z_m$$
where $Z_m = X$ for $m\notin S$ and $Z_m = Y$ for $m\in S$.
Define $\Phi$ to take invertible morphisms 
(see Proposition~\ref{invmorinDu}) of $\mathrm{Dpr}$
to identity morphisms in $\CV$. 
In accord with the desire for $\Phi$ to be a strong monoidal functor, a nested cup $\mathbb{I} \to (M,S)$ with $\#M=2h$ must be taken to the counit
of $X^{\ox h}\dashv Y^{\ox h}$.
A nested cap $(N,T) \to \mathbb{I}$ with $\#N=2k$ must be taken to the unit
of $X^{\ox k}\dashv Y^{\ox k}$.
The effect of $\Phi$ on a general morphism is then forced by Proposition~\ref{decompofgeneralmorph}. The fact that $\Phi$ preserves composition
follows from the snake equations for the dual pairs $X^{\ox k}\dashv Y^{\ox k}$.
Clearly the functor of the Proposition takes this $\Phi$ to $X\dashv Y$. 
\end{proof}
\section{Relationship to the free adjunction}\label{Rfa}
The 2-category freely generated by an adjunction was first considered in \cite{Aud1974}
with an explicit model $\mathrm{Adj}$ described in \cite{28}.
The 2-category $\mathrm{Adj}$ has two objects which we will now denote by $\flat$ and $\sharp$. The hom category $\mathrm{Adj}(\flat, \flat)$ is the usual algebraist's $\Delta$:
the objects are the finite ordinals $\mathbf{n}=\{0,1,\cdots n-1\}$ and the morphisms order-preserving functions. 
The hom category $\mathrm{Adj}(\sharp, \sharp)$ is the category $\Delta_{\top,\bot}$:
the objects are the non-empty ordinals $\mathbf{n}$ and the morphisms order-preserving functions which preserve first and last elements. 
The hom category $\mathrm{Adj}(\flat, \sharp)$ is the category $\Delta_{\top}$:
the objects are the non-empty ordinals $\mathbf{n}$ and the morphisms order-preserving functions which preserve last elements.
The hom category $\mathrm{Adj}(\sharp, \flat)$ is the category $\Delta_{\bot}$:
the objects are the non-empty ordinals $\mathbf{n}$ and the morphisms order-preserving functions which preserve first elements.
Composition is described in \cite{28}; in particular, the composition functor
$$\mathrm{Adj}(\flat, \flat)\times \mathrm{Adj}(\flat, \flat) \lra \mathrm{Adj}(\flat, \flat)$$
is ordinal sum, the tensor product of the monoidal category $\Delta$; and composition
$$\mathrm{Adj}(\sharp, \sharp)\times \mathrm{Adj}(\sharp, \sharp) \lra \mathrm{Adj}(\sharp, \sharp)$$
is the result of transporting ordinal sum across the duality $\Delta^{\mathrm{op}} \simeq \Delta_{\top,\bot}, \ \mathbf{n}\mapsto \mathbf{n+1}$. 

For a monoidal category $\CA$, we write $\Sigma \CA$ for the one object bicategory
whose hom category is $\CA$ and whose composition is the tensor product of $\CA$.
In particular, we have the bicategory $\Sigma \mathrm{Dpr}$ which contains the adjunction $-\dashv +$. Therefore, there is a pseudofunctor 
\begin{eqnarray}\label{Theta}
\Theta : \mathrm{Adj} \lra \Sigma \mathrm{Dpr}
\end{eqnarray}
taking the generating adjunction in $\mathrm{Adj}$ to $-\dashv +$.   

In order to study the image of $\Theta$, we distinguish some objects of $\mathrm{Dpr}$.
We call $(M,S)$ {\em alternating} when, for all consecutive elements $a, b\in M$, we have
$a\in S$ if and only if $b\notin S$. Notice that any $S$-cup or $S$-cap in such an $M$
can only have cardinality $0$ or $2$.

Let us look at the effect of $\Theta$ on the endomorphism hom of $\flat \in \mathrm{Adj}$.
This is a strong monoidal functor $\Theta_{\flat, \flat} : \Delta \to \mathrm{Dpr}$.
The object $\mathbf{n}$ of $\Delta$ is taken to the alternating object 
$(\mathbf{2n}, \mathrm{Ev}_n)$ where $\mathrm{Ev}_n = \{0, 2, \dots, 2n-2\}$
consists of the even elements of $\mathbf{2n}$. 
The surjection $\sigma_i : \mathbf{n+1}\to \mathbf{n}$ in $\Delta$, which identifies $i$
and $i+1$, is taken to $(\mathrm{de}_i, \mathbf{2n+2}) : (\mathbf{2n+2}, \mathrm{Ev}_{n+1})\to (\mathbf{2n}, \mathrm{Ev}_{n})$ where $\mathrm{de}_i$ is obtained from $\mathbf{2n+2}$ by deleting $2i+1$ and $2i+2$.
The injection $\partial_i : \mathbf{n}\to \mathbf{n+1}$ in $\Delta$, which does not have $i$
in its image, is taken to $(\mathbf{2n}, \mathrm{fa}_i) : (\mathbf{2n}, \mathrm{Ev}_n)\to (\mathbf{2n+2}, \mathrm{Ev}_{n+1})$ where $\mathrm{fa}_i$ is obtained from $\mathbf{2n+2}$ by deleting $2i$ and $2i+1$.     
\begin{eqnarray}\label{Deltagens}
\begin{aligned}
\psscalebox{0.8 0.8} 
{
\begin{pspicture}(0,-1.5469922)(16.52,1.5469922)
\rput[bl](7.38,-1.4110547){$\partial_2 : \mathbf{4}\longrightarrow \mathbf{5}$}
\rput[bl](1.74,-1.4710547){$\sigma_2 : \mathbf{5}\longrightarrow \mathbf{4}$}
\rput[bl](6.94,1.2889453){$+ \ - \ + \ - \ + \ - \ + \ -$}
\rput[bl](6.64,-0.8910547){$+ \ - \ + \ - \ + \ - \ + \ - \ + \ -$}
\rput[bl](0.86,-0.8910547){$+ \ - \ + \ - \ + \ - \ + \ -$}
\rput[bl](0.0,1.2889453){$+ \ - \ + \ - \ + \ - \ + \ - \ + \ -$}
\psbezier[linecolor=black, linewidth=0.04](2.7,1.1489453)(2.72,0.5089453)(3.24,0.5289453)(3.22,1.1889453125)
\psbezier[linecolor=black, linewidth=0.04](9.420068,-0.53447723)(9.371908,0.040234484)(8.831322,0.047079634)(8.839932,-0.5676321124760306)
\psline[linecolor=black, linewidth=0.04](7.04,1.1689453)(6.78,-0.5510547)
\psline[linecolor=black, linewidth=0.04](7.56,1.2289453)(7.32,-0.5910547)
\psline[linecolor=black, linewidth=0.04](8.12,1.1889453)(7.84,-0.5710547)
\psline[linecolor=black, linewidth=0.04](8.56,1.1889453)(8.38,-0.61105466)
\psline[linecolor=black, linewidth=0.04](9.22,1.1689453)(9.96,-0.61105466)
\psline[linecolor=black, linewidth=0.04](9.72,1.1689453)(10.46,-0.5910547)
\psline[linecolor=black, linewidth=0.04](10.24,1.1289454)(10.96,-0.5910547)
\psline[linecolor=black, linewidth=0.04](10.76,1.1689453)(11.34,-0.6510547)
\psline[linecolor=black, linewidth=0.04](0.28,1.2089453)(1.02,-0.5310547)
\psline[linecolor=black, linewidth=0.04](0.78,1.1689453)(1.52,-0.5710547)
\psline[linecolor=black, linewidth=0.04](1.32,1.1889453)(2.06,-0.5510547)
\psline[linecolor=black, linewidth=0.04](1.82,1.1689453)(2.56,-0.5710547)
\psline[linecolor=black, linewidth=0.04](2.28,1.1689453)(3.02,-0.5710547)
\psline[linecolor=black, linewidth=0.04](3.74,1.1689453)(3.58,-0.5310547)
\psline[linecolor=black, linewidth=0.04](4.26,1.1489453)(4.06,-0.5510547)
\psline[linecolor=black, linewidth=0.04](4.66,1.1689453)(4.52,-0.5310547)
\end{pspicture}
}
\end{aligned}
\end{eqnarray}
We define $\Theta$ on general morphisms of $\Delta$ using the following notation from \cite{118}. For each morphism $\xi : \mathbf{m}\to \mathbf{n}$ in $\Delta$, we put
$$\xi^{\ell} = \{i\in \mathbf{m-1} : \xi(i)=\xi(i+1)\} \ \text{  and  } \ \xi^{r} = \{j \in \mathbf{n} : j\notin \mathrm{im}\xi \} \ .$$
Notice that $\xi^{\ell} = \varnothing$ means that $\xi$ is injective and $\xi^{r} = \varnothing$ means that $\xi$ is surjective.
In general, $\xi$ is determined by $(\xi^{\ell}, \xi^{r})$.
We have that $\Theta(\xi) = (A_{\xi}, B_{\xi}) : (\mathbf{2m}, \mathrm{Ev}_m) \to (\mathbf{2n}, \mathrm{Ev}_n)$ is defined by
$$A'_{\xi} = \{2i+1,2i+2 : i\in \xi^{\ell} \} \ \text{  and  } \ B'_{\xi} = \{2j, 2j+1 : j\in \xi^{r} \} \ ,$$
where, as before, the primed sets denote the appropriate complements.
The restricted possibility for cups and caps in alternating objects means that 
every morphism $(\mathbf{2m}, \mathrm{Ev}_m) \to (\mathbf{2n}, \mathrm{Ev}_n)$ 
must be of the form $(A_{\xi}, B_{\xi})$ for some $\xi : \mathbf{m} \to \mathbf{n}$ in $\Delta$. 
So the functor $\Theta_{\flat, \flat} : \Delta \to \mathrm{Dpr}$ is fully faithful.
A similar analysis applies to the other three hom categories of $\mathrm{Adj}$ yielding:    
\begin{proposition}
The pseudofunctor $\Theta$ \eqref{Theta} is locally fully faithful.
\end{proposition}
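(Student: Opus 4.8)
The statement says that each of the four functors $\Theta_{\flat,\flat}$, $\Theta_{\flat,\sharp}$, $\Theta_{\sharp,\flat}$, $\Theta_{\sharp,\sharp}$ induced by $\Theta$ on hom categories is fully faithful; the case $\Theta_{\flat,\flat}\colon\Delta\to\mathrm{Dpr}$ is exactly what the preceding discussion establishes, so the plan is to run the same argument for the three remaining homs. The reusable core is the observation recorded above: $\Theta$ takes values among \emph{alternating} objects, and in an alternating object every $S$-cup and every $T$-cap has cardinality $0$ or $2$. Hence, for alternating $(M,S)$ and $(N,T)$, a morphism $(A,B)\colon(M,S)\to(N,T)$ is pinned down by the complements $A'$ and $B'$, each of which is a disjoint union of \emph{adjacent} pairs (the size-$2$ cups, respectively caps). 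Reading off the positions of these pairs produces two subsets which I will match with $(\xi^{\ell},\xi^{r})$ for a simplicial operator $\xi$ in the relevant category.

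First I would pin down, for each hom, the family of alternating words hit by $\Theta$. Using $f\mapsto-$ and $u\mapsto+$ (so that $-\dashv+$ realises $f\dashv u$), the generating $1$-cells force the images to be $(+-)^{n}$ for $\Theta_{\flat,\flat}$, $(-+)^{n}$ for $\Theta_{\sharp,\sharp}$, $-(+-)^{n-1}$ for $\Theta_{\flat,\sharp}$, and $+(-+)^{n-1}$ for $\Theta_{\sharp,\flat}$. In every case faithfulness is immediate: the assignment encodes $\xi^{\ell}$ by the domain cups and $\xi^{r}$ by the codomain caps, $(\xi^{\ell},\xi^{r})$ can be read back off $\Theta(\xi)=(A_{\xi},B_{\xi})$, and $\xi$ is determined by $(\xi^{\ell},\xi^{r})$. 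For fullness I would start from an arbitrary $(A,B)$ between two image words, use the size-$\le2$ property to decompose $A'$ and $B'$ into adjacent cup/cap pairs, and read off candidate sets; the constraint $\#A=\#B$ corresponds to the equality of ranks $m-\#\xi^{\ell}=n-\#\xi^{r}$ that makes $(\xi^{\ell},\xi^{r})$ the data of an actual operator $\mathbf{m}\to\mathbf{n}$, while condition (ii) on cobbers is automatic, since both words share the same leftmost-through-string sign and the cobber bijection is order-preserving, hence sign-preserving throughout.

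The genuinely case-sensitive point, and the one I expect to be the main obstacle, is to match the \emph{endpoint letters} of the words with the $\top/\bot$-preservation clause defining the hom. Both clauses concern only $\xi^{r}$: the operator $\xi$ preserves the bottom iff $0\notin\xi^{r}$ and the top iff $n-1\notin\xi^{r}$, so I would translate them into statements about which extreme caps are admissible in the codomain word. The leading and trailing letters govern exactly this, since a size-$2$ cap is an adjacent $(+,-)$ pair, so a leading $-$ or a trailing $+$ forces the corresponding extreme position to be a through-string; but the correspondence between a cap and the value of $\mathrm{im}\,\xi$ it omits carries an index shift that must be computed separately in each case. Once the shift is in hand one checks (readily on the smallest instances) that $\Theta_{\flat,\sharp}$ lands in $\Delta_{\top}$, that $\Theta_{\sharp,\flat}$ lands in $\Delta_{\bot}$, and that $\Theta_{\sharp,\sharp}$ lands in $\Delta_{\top,\bot}$, with the admissible cap-indices exhausting precisely the permitted values of $\xi^{r}$. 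One could instead cut the work in half by invoking the co-involution $\mathrm{Adj}\cong\mathrm{Adj}^{\mathrm{co}}$ swapping $\flat\leftrightarrow\sharp$ and $f\leftrightarrow u$ together with the equivalence $\mathrm{Dpr}^{\mathrm{op}}\simeq\mathrm{Dpr}$ swapping $+\leftrightarrow-$, under which $\Theta$ corresponds to itself, so that $\Theta_{\sharp,\sharp}$ and $\Theta_{\sharp,\flat}$ become the opposites of the already-treated $\Theta_{\flat,\flat}$ and $\Theta_{\flat,\sharp}$.
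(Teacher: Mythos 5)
Your proposal is correct and follows essentially the same route as the paper: the paper's proof consists of the detailed treatment of $\Theta_{\flat,\flat}$ (alternating objects, cups/caps of cardinality at most $2$, the encoding of $\xi$ by $(\xi^{\ell},\xi^{r})$ via $A'_{\xi}$, $B'_{\xi}$) followed by the remark that ``a similar analysis applies to the other three hom categories,'' and your argument is precisely that similar analysis, spelled out with the correct image words and the observation that the $\top/\bot$-preservation clauses correspond to which extreme caps the endpoint letters permit. Your closing duality trick ($\mathrm{Adj}\cong\mathrm{Adj}^{\mathrm{co}}$ swapping $\flat\leftrightarrow\sharp$ against $\mathrm{Dpr}\cong\mathrm{Dpr}^{\mathrm{op}}$ swapping $+\leftrightarrow-$) is a valid optional refinement not present in the paper, but it does not change the substance of the argument.
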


We can say this differently if $\mathrm{Dpr}$ is given and $\mathrm{Adj}$ is to be obtained. 
Take the free category $\CJ$ on the directed graph
$$\xymatrix{
\bullet\ar@/^/[r]^{-}&\diamond\ar@/^/[l]^{+},
}$$
regarded as a locally discrete 2-category. 
Then $\mathrm{Adj}$ is obtained by factoring the 2-functor $\CJ \to \Sigma\mathrm{Dpr}$, which takes
the morphisms $-$ to $-$ and $+$ to $+$, into a 2-functor $\CJ \to \mathrm{Adj}$ which is bijective on both objects and 1-morphisms, and a functor $\mathrm{Adj} \to \Sigma\mathrm{Dpr}$ which is locally fully faithful. 

\section{Interlude on iterated duals}\label{Ioid}

Let $\Lambda$ denote a set equipped with a partial endofunction $\sigma : \Lambda \rightharpoonup \Lambda$
such that
\begin{itemize}
\item[a.] $\sigma{s} = \sigma{t}$ implies $s=t$;
\item[b.] $\sigma^n{s} = s$ for some $s$ implies $n=0$.
\end{itemize}
Let $\Lambda^*$ be the set of words $\underline{s} = s_1s_2\dots s_k$ in the alphabet $\Lambda$. 
Let 
$$|-| : \Lambda^* \to \mathbb{N}, \ \underline{s} \mapsto k$$
take a word to its length. Put $\langle k\rangle = \{1,2,\dots ,k\}$.

An interval $I$ in $\langle k\rangle$ is an {\em $\underline{s}$-cup} [respectively, {\em $\underline{s}$-cap}] when it has even cardinality and $s_n = \sigma s_m$ [respectively, $s_m = \sigma s_n$] whenever $m< n$ are both in $I$ and 
$$\#\{x\in I : x\le m\} = \#\{x\in I : n\le x\} \ .$$ 
We put $m^{\cup}=n$ [respectively, $m^{\cap}=n$] in this situation.
We will use the definition of {\em snaking} and {\em cobbers} as in Section~\ref{CdDpr}.

The goal in this section is to define the free monoidal category $\mathrm{D}(\Lambda,\sigma)$ containing the elements of $\Lambda$ as objects in such a way that $s \dashv \sigma{s}$ for all $s\in \Lambda$ on which $\sigma$ is defined.
First we give some examples.
\begin{Example}
\begin{itemize}
\item[0.] $\Lambda$ arbitrary and $\sigma$ with empty domain. Then $\mathrm{D}(\Lambda,\sigma)$
is the free monoidal category (discrete $\Lambda^*$) on the set $\Lambda$. 
\item[1.] $\Lambda = \{-,+\}$, $\sigma{-}=+$ and $\sigma{+}$ undefined. Then $\mathrm{D}(\Lambda,\sigma) = \mathrm{Dpr}$ as in Section~\ref{CdDpr}.
\item[2.] $\Lambda = \mathbb{N}$ and $\sigma{n}=n+1$ for all natural numbers $n$.
Then $\mathrm{D}(\Lambda,\sigma)$ is the free monoidal category $\mathrm{Dseq}$ containing a duality sequence $X_0\dashv X_1\dashv X_2 \dashv \dots$ of objects.
Equally, $\mathrm{D}(\Lambda,\sigma)$ is the free right autonomous monoidal category $\mathrm{Dseq}$ generated by a single object. A geometric model can be derived as in Section~\ref{gmDpr}.
The objects are words of natural numbers and composition leads to diagrams of the form shown in 
\eqref{Nex} for morphisms $$2\to 2\ 4\ 3\ 1\ 0\ 2\ 3\ 1\ 2 \to 2\ 4\ 3.$$
\item[3.] $\Lambda = \mathbb{Z}$ and $\sigma{n}=n+1$ for all integers $n$. Then $\mathrm{D}(\Lambda,\sigma)$ is the free autonomous monoidal category on a single generating object.
Equally, $\mathrm{D}(\Lambda,\sigma)$ is the free monoidal category $\mathrm{Dseq}$ containing a doubly infinite string of object dualities $\dots \dashv X_{-2}\dashv X_{-1}\dashv X_0\dashv X_1\dashv X_2 \dashv \dots$.
\item[4.] Consider an arbitrary set $\Lambda$ with an element $J\in \Lambda$, such that $\sigma$ is only defined on $J$, and set $\Pi:=\Lambda\backslash\{\sigma J\}$. Then $\mathrm{D}(\Lambda,\sigma)$ is $\CC[J^\vee]$ as defined in Section~\ref{ConstructionC[J*]} for $\CC:=\Pi^\ast$.
\end{itemize}
\end{Example}

\begin{eqnarray}\label{Nex}
\begin{aligned}
\psscalebox{0.6 0.6} 
{
\begin{pspicture}(0,-5.03333)(11.46,5.03333)
\rput[bl](0.0,-0.85739255){2}
\rput[bl](1.62,-0.83739257){4}
\rput[bl](3.18,-0.85739255){3}
\rput[bl](4.6,-0.83739257){1}
\rput[bl](5.96,-0.85739255){0}
\rput[bl](7.22,-0.83739257){2}
\rput[bl](8.68,-0.83739257){3}
\rput[bl](10.0,-0.7973926){1}
\rput[bl](11.24,-0.7973926){2}
\psbezier[linecolor=black, linewidth=0.04](10.100227,-0.44583118)(10.129241,5.4771996)(0.22878525,5.454077)(0.19977221,-0.46895396566861564)
\psbezier[linecolor=black, linewidth=0.04](8.780228,-0.4258312)(8.809241,3.841829)(1.7887852,3.738706)(1.7597722,-0.5289539656686156)
\psbezier[linecolor=black, linewidth=0.04](7.3602276,-0.3458312)(7.3892407,2.2664585)(3.3487852,2.1833355)(3.3197722,-0.42895396566861566)
\psbezier[linecolor=black, linewidth=0.04](6.060228,-0.4858312)(6.089241,1.4979208)(4.748785,1.474798)(4.7197723,-0.5089539656686156)
\psbezier[linecolor=black, linewidth=0.04](7.3803983,-0.92857707)(7.409493,-2.9123278)(8.828696,-2.8699589)(8.799602,-0.886208104680918)
\psbezier[linecolor=black, linewidth=0.04](6.079961,-0.9572844)(6.0617204,-3.5696714)(10.101799,-3.4698877)(10.120039,-0.8575007785186654)
\psbezier[linecolor=black, linewidth=0.04](4.7396855,-0.9813237)(4.70585,-5.2489486)(11.286479,-5.161086)(11.320314,-0.893461459904289)
\psline[linecolor=black, linewidth=0.04](0.16,-0.9573926)(0.12,-4.5773926)
\psline[linecolor=black, linewidth=0.04](1.74,-0.9573926)(1.7,-4.5773926)
\psline[linecolor=black, linewidth=0.04](3.3,-0.9373926)(3.26,-4.5573926)
\psline[linecolor=black, linewidth=0.04](11.36,4.7426076)(11.34,-0.27739257)
\rput[bl](0.04,-4.9373927){2}
\rput[bl](1.58,-4.9373927){4}
\rput[bl](3.16,-4.9573927){3}
\rput[bl](11.24,4.8426075){2}
\end{pspicture}
}
\end{aligned}
\end{eqnarray}

We now describe combinatorially the monoidal category $\mathrm{D}(\Lambda,\sigma)$ in general. 

The objects are the elements of $\Lambda^*$.  

A morphism $(A,B) : \underline{s} \to \underline{t}$ in $\mathrm{D}(\Lambda,\sigma)$ consists of subsets $A\subseteq \langle |\underline{s}|\rangle$ and $B\subseteq \langle |\underline{t}|\rangle$ such that 
\begin{itemize}
\item[(i)] $\#A=\#B$,
\item[(ii)] for all cobbers $a\in A$ and $b\in B$, $s_a = t_b$,
\item[(iii)] the complement $A' = \langle |\underline{s}|\rangle\backslash A$ is a union of $\underline{s}$-cups in $\langle |\underline{s}|\rangle$,
\item[(iv)] the complement $B'=\langle |\underline{t}|\rangle\backslash B$ is a union of $\underline{t}$-caps in $\langle |\underline{t}|\rangle$.
\end{itemize}
The identity morphism of $\underline{s}$ is $(\langle |\underline{s}|\rangle,\langle |\underline{s}|\rangle) : \underline{s}\to \underline{s}$. 

The composite 
$$\big{(}\langle |\underline{s}|\rangle \xra{(E,F)} \langle |\underline{u}|\rangle\big{)} = \big{(}\langle |\underline{s}|\rangle \xra{(A,B)} \langle |\underline{t}|\rangle \xra{(C,D)} \langle |\underline{u}|\rangle\big{)}$$ is defined by
\begin{eqnarray*}
E = \{x \in A : \ \text{the cobber} \ y\in B \ \text{of} \ x \ \text{has} \ y\rightsquigarrow_{B'\cap C'}y' \ \text{with} \ y'\in C \} \\ 
\ \cup \ \{x' \in A : \ \text{the cobber} \ y'\in B \ \text{of} \ x' \ \text{has} \ y\rightsquigarrow_{B'\cap C'} y' \ \text{with} \ y\in C \} 
\end{eqnarray*}
and
\begin{eqnarray*}
F = \{z \in D : \ \text{the cobber} \ y\in C \ \text{of} \ z \ \text{has} \ y\rightsquigarrow_{B'\cap C'}y'  \ \text{with} \ y'\in B \} \\ 
\ \cup \ \{z' \in D : \ \text{the cobber} \ y'\in C \ \text{of} \ z' \ \text{has} \ y\rightsquigarrow_{B'\cap C'}y' \ \text{with} \ y\in B \} \ . 
\end{eqnarray*}

The intersection of the two sets whose union gives the definition of $E$ contains precisely those $x\in A$
whose cobber $y\in B$ is also in $C$.
The intersection of the two sets whose union is $F$ contains precisely those $z\in D$
whose cobber $y\in C$ is also in $B$.
The sets $E$ and $F$ have the same cardinality: the cobber in $F$ of an $x\in E$ in the first set of the union is the cobber in $D$ of the $y'\in C$ while the cobber in $F$ of an $x'\in E$ in the second set of the union is the cobber in $D$ of the $y\in C$. 

The complement $E'$ of $E$ is a union of $\underline{s}$-cups; indeed, we have the disjoint union
\begin{eqnarray*}
E' = A'\cup \{x, x' \in A : \ \text{the cobbers} \ y, y'\in B \ \text{of} \ x, x' \ \text{have} \ y\rightsquigarrow_{B'\cap C'} y' \ \}  \  . 
\end{eqnarray*}  
Similarly,
\begin{eqnarray*}
F' = D'\cup \{z, z' \in D : \ \text{the cobbers} \ y, y'\in C \ \text{of} \ z, z' \ \text{have} \ y\rightsquigarrow_{B'\cap C'} y' \ \}  \  . 
\end{eqnarray*} 

The tensor product for $\mathrm{D}(\Lambda,\sigma)$ is defined by juxtaposition
$\underline{s}\ox \underline{u} = \underline{s} \underline{u}$ on objects.
Note that $|\underline{s}\underline{u}| = |\underline{s}| + |\underline{u}|$ and so $\langle |\underline{s}\underline{u}|\rangle$ is the ordinal sum $\langle |\underline{s}|\rangle + \langle |\underline{u}|\rangle$. 
So 
$$(\underline{s}\xra{(A,B)}\underline{t})\otimes (\underline{u}\xra{(C,D)}\underline{v}) = (\underline{s} \underline{u}\xra{(A+C, B+D)}\underline{t} \underline{v}) \ .$$  

The unit $\mathbb{I}$ for this tensor product is of course the word of length $0$.  

For each $a\in \Lambda$ at which $\sigma$ is defined we have a duality $a\dashv \sigma a$ in $\mathrm{D}(\Lambda,\sigma)$ where we are identifying elements of $\Lambda$ with elements of $\Lambda^*$ as the one letter words. 
Put $\varepsilon = (\varnothing, \varnothing) :  a\ox \sigma a \to \mathbb{I}$
and $\eta = (\varnothing, \varnothing) : \mathbb{I}\to \sigma a\ox a$.
It is an easy exercise in the definition of composition in $\mathrm{D}(\Lambda,\sigma)$ to
see that the composites \eqref{radj} (with $X=a$ and $Y=\sigma a$) are identities.
So indeed $a\dashv \sigma a$.

\begin{proposition}\label{invmorinDLambda}
The only invertible morphisms in $\mathrm{D}(\Lambda,\sigma)$ are identities. 
 \end{proposition}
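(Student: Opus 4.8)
The plan is to mimic the proof of Proposition~\ref{invmorinDu}, exploiting the fact that in $\mathrm{D}(\Lambda,\sigma)$ the underlying position sets are the \emph{standardized} ordinals $\langle|\underline{s}|\rangle$, so that skeletality upgrades ``invertible'' to ``identity''. The crucial combinatorial observation I would record first is purely formal: for any composite $\underline{s} \xra{(A,B)} \underline{t} \xra{(C,D)} \underline{u}$ with result $(E,F)$, the very shape of the defining formulas gives $E \subseteq A$ and $F \subseteq D$, since each of the two sets whose union defines $E$ (respectively $F$) is by construction a subset of $A$ (respectively $D$).

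I would then suppose $(A,B) : \underline{s} \to \underline{t}$ is invertible with inverse $(C,D) : \underline{t} \to \underline{s}$, and examine the two composites. Realizing $(C,D)\circ(A,B) = \mathrm{id}_{\underline{s}}$ as $\underline{s} \xra{(A,B)} \underline{t} \xra{(C,D)} \underline{s}$ with result $(E,F)$, and recalling that the identity of $\underline{s}$ is $(\langle|\underline{s}|\rangle,\langle|\underline{s}|\rangle)$, I would read off $E = F = \langle|\underline{s}|\rangle$. Combined with the containments $E\subseteq A$ and $F\subseteq D$ from the first step, this forces $A = \langle|\underline{s}|\rangle$ and $D = \langle|\underline{s}|\rangle$, i.e. $A' = \varnothing$ and $D' = \varnothing$. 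Running the symmetric argument on the other composite $\underline{t} \xra{(C,D)} \underline{s} \xra{(A,B)} \underline{t}$, whose result is $\mathrm{id}_{\underline{t}} = (\langle|\underline{t}|\rangle,\langle|\underline{t}|\rangle)$, yields $C = \langle|\underline{t}|\rangle$ and $B = \langle|\underline{t}|\rangle$.

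To finish, I would note that with $A$ and $B$ both full, condition (i) gives $|\underline{s}| = |\underline{t}|$, so the cobber bijection $A \to B$ is the order isomorphism between two copies of the same standard ordinal, namely the identity function on $\{1,\dots,|\underline{s}|\}$. Condition (ii) applied to the cobber pair $(i,i)$ then reads $s_i = t_i$ for every $i$, whence $\underline{s} = \underline{t}$ and $(A,B) = (\langle|\underline{s}|\rangle,\langle|\underline{s}|\rangle) = \mathrm{id}_{\underline{s}}$, as required.

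I do not expect a genuine obstacle here: the only thing to watch is the bookkeeping of which subset governs which side of each composite, so that the containments $E\subseteq A$ and $F\subseteq D$ are applied to the correct factor. The conceptual point distinguishing this statement from Proposition~\ref{invmorinDu}, where invertible morphisms of $\mathrm{Dpr}$ need \emph{not} be identities, is precisely that here the positions are the standard ordinals $\langle|\underline{s}|\rangle$ rather than arbitrary finite linearly ordered sets; a label-preserving order isomorphism of such position sets has no freedom and must be an identity.
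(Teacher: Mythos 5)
Your proof is correct. The paper in fact states Proposition~\ref{invmorinDLambda} (like Proposition~\ref{invmorinDu}) without proof, and your argument supplies exactly the intended easy verification: the composition formulas force the containments $E\subseteq A$ and $F\subseteq D$, so both composites being identities forces $A$, $B$, $C$, $D$ to be the full position sets, and then conditions (i) and (ii) together with the fact that the positions are the standardized ordinals $\langle|\underline{s}|\rangle$ (rather than arbitrary finite linearly ordered sets, which is precisely why $\mathrm{Dpr}$ has non-identity isomorphisms while $\mathrm{D}(\Lambda,\sigma)$ does not) force $\underline{s}=\underline{t}$ and $(A,B)=\mathrm{id}_{\underline{s}}$.
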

 
 As in Section~\ref{CdDpr}, we call {\em elementary} those morphisms in $\mathrm{D}(\Lambda,\sigma)$ of the form 
$$(\varnothing,\varnothing) : \underline{s} \to \underline{u}$$
where $|\underline{s}|$ and $|\underline{t}|$ are not both $0$.
Proposition~\ref{decompofgeneralmorph} generalizes.

\begin{proposition}\label{gendecompofgeneralmorph}
Every morphism of $\mathrm{D}(\Lambda,\sigma)$ is uniquely of the form 
$$f_1\ox e_1\ox f_2 \ox e_2\ox \dots \ox e_{k-1}\ox f_k$$
where $f_1, \dots , f_k$ are identity morphisms and 
$e_1, \dots , e_{k-1}$ are elementary morphisms.
\end{proposition}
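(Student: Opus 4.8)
The plan is to mimic the proof of Proposition~\ref{decompofgeneralmorph} essentially verbatim: the combinatorial data and the composition formula for $\mathrm{D}(\Lambda,\sigma)$ are defined exactly as in Section~\ref{CdDpr}, and all the structural facts about $\underline{s}$-cups, $\underline{t}$-caps and snaking that the $\mathrm{Dpr}$ argument rests on hold here by the same reasoning. The one genuinely new input is Proposition~\ref{invmorinDLambda}: because $\mathrm{D}(\Lambda,\sigma)$ is skeletal (objects are words in $\Lambda^*$, with underlying ordered set the canonical $\langle k\rangle$), the through-string blocks come out as honest \emph{identities} rather than the merely invertible isomorphisms appearing in the non-skeletal $\mathrm{Dpr}$.

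For existence I would start from a morphism $(A,B):\underline{s}\to\underline{t}$ with $\phi:A\to B$ its order-preserving cobber bijection. I would then cut between consecutive elements $a<a''$ of $A$ precisely when $a,a''$ fail to be adjacent in $\langle|\underline{s}|\rangle$, or when $\phi(a),\phi(a'')$ fail to be adjacent in $\langle|\underline{t}|\rangle$. This partitions $A$ into intervals $K_1<\dots<K_k$ and, via $\phi$, partitions $B$ into intervals $L_i=\phi(K_i)$; these are the coarsest blocks of $A$ (resp. $B$) carried to intervals by $\phi$. Between them sit the domain gaps $E_i\subseteq A'$ and codomain gaps $F_i\subseteq B'$, with any leading or trailing gap absorbed into an identity on the empty word. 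I would set $f_i$ to be the block morphism on the word $\underline{s}|_{K_i}$ that is full on both sides, and $e_i=(\varnothing,\varnothing):\underline{s}|_{E_i}\to\underline{t}|_{F_i}$.

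Next I would check these factors have the stated types and tensor back to $(A,B)$. By condition (ii) the restricted words $\underline{s}|_{K_i}$ and $\underline{t}|_{L_i}$ coincide, so $f_i$ is literally the identity of that word; this is exactly the point where Proposition~\ref{invmorinDLambda} replaces the appeal to invertibility in the $\mathrm{Dpr}$ case. Each gap $E_i$ is either empty or a maximal run of $A'$, and since $A'$ is a union of $\underline{s}$-cups (condition (iii)) and cups are intervals, such a run is itself a union of $\underline{s}$-cups; dually for $F_i$ using (iv). Hence each $e_i$ is a legitimate morphism, and it is elementary because $E_i$ and $F_i$ are not both empty — the cut producing it was forced by a genuine gap in either the domain or the codomain. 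Applying the tensor formula, the $f_i$ contribute full through-blocks and the $e_i$ contribute nothing to $A$ or $B$, so the product recovers exactly $(A,B)$.

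For uniqueness I would run this backwards: any expression of the prescribed form determines, through the tensor formula, that $A$ is the union of the domain blocks of the $f_i$ and $A'$ that of the $e_i$, and symmetrically for $B$; demanding that each $e_i$ be genuinely elementary and each interior $f_i$ act on a non-empty word (otherwise two adjacent elementaries would fuse into one, as $(\varnothing,\varnothing)\ox(\varnothing,\varnothing)$ is again elementary) pins the cut points to precisely those constructed above. I expect the only real friction to be the end-of-word bookkeeping — deciding when $f_1$ or $f_k$ should be the identity on the empty word — rather than any new conceptual obstacle, since the entire cup/cap/snaking apparatus carries over unchanged from $\mathrm{Dpr}$.
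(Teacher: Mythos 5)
Your proof is correct and takes essentially the same approach as the paper, which establishes the $\mathrm{Dpr}$ case (Proposition~\ref{decompofgeneralmorph}) by cutting $A$ and $B$ into cobber-compatible interval blocks separated by gap intervals, and then simply asserts that this argument generalizes to $\mathrm{D}(\Lambda,\sigma)$. If anything, your write-up is more careful than the paper's on two points it glosses over: the handling of leading/trailing gaps via identities on the empty word, and the convention that interior $f_i$ act on non-empty words, without which strict uniqueness would fail because adjacent elementary morphisms fuse (as $(\varnothing,\varnothing)\ox(\varnothing,\varnothing)$ is again elementary).
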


 \begin{proposition}
For any strict monoidal category $\CV$ and any family $(X_a)_{a\in \Lambda}$ of objects of $\CV$ 
with chosen dualities $X_a\dashv X_{\sigma a}$, there exists a unique strict monoidal functor 
$\Phi : \mathrm{D}(\Lambda,\sigma) \lra \CV$ such that $\Phi a = X_a$ for all $a\in \Lambda$
and $\Phi$ maps the dualities $a\dashv \sigma a$ to the chosen dualities $X_a\dashv X_{\sigma a}$.
\end{proposition}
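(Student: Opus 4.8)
The plan is to follow the template of the analogous statement for $\mathrm{Dpr}$, using strictness of $\CV$ to force $\Phi$ on objects and Proposition~\ref{gendecompofgeneralmorph} to reduce the definition on morphisms to the elementary ones. On objects there is no choice: a strict monoidal functor with $\Phi a = X_a$ must send the word $\underline{s}=s_1\cdots s_k$ to $\Phi(\underline{s}) = X_{s_1}\otimes\cdots\otimes X_{s_k}$, and the empty word to $\mathbb{I}$; identities must go to identities. By Proposition~\ref{gendecompofgeneralmorph} every morphism is uniquely $f_1\ox e_1\ox\cdots\ox e_{k-1}\ox f_k$ with the $f_i$ identities and the $e_i$ elementary, so strict monoidality forces $\Phi$ to be determined by its values on elementary morphisms.

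First I would pin down the image of an elementary morphism $e=(\varnothing,\varnothing):\underline{s}\to\underline{u}$. Here condition (iii) makes $\langle|\underline{s}|\rangle$ a union of $\underline{s}$-cups and condition (iv) makes $\langle|\underline{u}|\rangle$ a union of $\underline{u}$-caps, so $e$ factors canonically in $\mathrm{D}(\Lambda,\sigma)$ as a cup-morphism $\underline{s}\to\mathbb{I}$ followed by a cap-morphism $\mathbb{I}\to\underline{u}$, each a tensor product of nested cups, respectively caps. A single $\underline{s}$-cup on letters $a_1\cdots a_h\,(\sigma a_h)\cdots(\sigma a_1)$ is, in $\mathrm{D}(\Lambda,\sigma)$, the nested counit of the dual pair obtained by tensoring $a_i\dashv\sigma a_i$; since dualities tensor (see \eqref{adjcomp}) the chosen dualities $X_{a_i}\dashv X_{\sigma a_i}$ assemble into a dual pair $X_{a_1}\otimes\cdots\otimes X_{a_h}\dashv X_{\sigma a_h}\otimes\cdots\otimes X_{\sigma a_1}$ whose nested counit I take as the image. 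Tensoring these over the cups of $\underline{s}$ gives a morphism $\mathrm{cup}_{\underline{s}}:\Phi(\underline{s})\to\mathbb{I}$, dually a morphism $\mathrm{cap}_{\underline{u}}:\mathbb{I}\to\Phi(\underline{u})$, and I set $\Phi(e)=\mathrm{cap}_{\underline{u}}\circ\mathrm{cup}_{\underline{s}}$. This is forced: $\Phi$ must send each generating $\varepsilon_a$, $\eta_a$ to the chosen counit and unit, and the nested cups and caps are tensor-composites of these.

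The substantive step, and the one I expect to be the \textbf{main obstacle}, is checking that $\Phi$ preserves composition. Given composable $(A,B):\underline{s}\to\underline{t}$ and $(C,D):\underline{t}\to\underline{u}$ with composite $(E,F)$ defined by the snaking formula, I would stack the two string pictures so that the caps of the upper morphism (indexed by $B'$) meet the cups of the lower morphism (indexed by $C'$) along the middle word $\underline{t}$. Under $\Phi$ this stacked diagram becomes a long composite of the chosen counits, units and identities in $\CV$, and the task is to show it straightens to $\Phi(E,F)$. The key is that each maximal snaking chain $y\rightsquigarrow_{B'\cap C'}y'$ appearing in the definitions of $E$ and $F$ corresponds, under $\Phi$, to an alternating composite of a chosen unit followed by a chosen counit along one of the tensored dual pairs; applying the snake equations \eqref{radj}--\eqref{radjobv} for these pairs collapses each such chain to an identity, realizing precisely the cobber reassignment recorded by the formulas for $E$ and $F$. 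I would organize this as an induction on the number of snakes to be removed, invoking at each stage a single snake equation for the appropriate tensored dual pair (legitimate by \eqref{adjcomp}); the bookkeeping showing that what survives is exactly the through-strings of $(E,F)$, with the residual cups and caps given by the stated disjoint-union descriptions of $E'$ and $F'$, is the delicate part.

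Finally, strict monoidality on morphisms is immediate since the tensor product is juxtaposition and the decomposition of Proposition~\ref{gendecompofgeneralmorph} is compatible with $\ox$; and $\Phi(\mathbb{I})=\mathbb{I}$ with trivial coherence, both categories being strict. Uniqueness holds because every choice above was forced: the object assignment by strictness and $\Phi a=X_a$, and the morphism assignment by the unique decomposition together with the requirement that $a\dashv\sigma a$ map to the chosen $X_a\dashv X_{\sigma a}$.
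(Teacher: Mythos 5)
Your proposal is correct and takes essentially the same approach as the paper: the paper's proof (given for the analogous $\mathrm{Dpr}$ statement and implicitly invoked here) likewise sends objects to the corresponding tensor products and identities to identities, sends nested cups and caps to the counits and units of the tensored dual pairs, forces the value on general morphisms via the unique decomposition into identity and elementary factors, and disposes of composition-preservation by the snake equations for the tensored dualities. Your elaboration of the snaking/cobber bookkeeping simply fills in detail that the paper compresses into the single sentence that composition preservation ``follows from the snake equations.''
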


\begin{remark}
If $a\in \Lambda$ is in the domain of $\sigma$ then the monoidal full subcategory of $\mathrm{D}(\Lambda,\sigma)$, whose objects are the words in letters $a$ and $\sigma{a}$, is equivalent
to $\mathrm{Dpr}$.  
\end{remark}

\section{Construction of $\CC[J^{\vee}]$}\label{ConstructionC[J*]}

We write $[m]$ for the set $\{0, 1, \dots , m\}$ and $\partial_i : [m-1] \to [m]$ for the order-preserving injective function whose image does not contain $i$. 

Let $J$ be an object of the strict monoidal category $\CC$.

At this stage we only discuss the objects of the desired strict monoidal category $\CC[J^{\vee}]$ and their tensor product, by introducing the corresponding monoid.
Let $\CG_0$ be the set of functions $U : [m]\to \ob\CC$. Objects of $\CC$ are identified with functions $[0]\to \ob\CC$. For another $V : [n]\to \ob\CC$, the `tensor product' $U\ox V : [m+n]\to \ob\CC$ is defined by
\begin{eqnarray}
(U\ox V)_i = \left\{
\begin{array}{ll}
U_i & \text{for } i < m \\
U_m\ox V_0 & \text{for } i = m \\
V_{i-m} & \text{for } i > m \ .
\end{array} \right.
\end{eqnarray}
We thus obtain a monoid $(\CG_0,\otimes,\mathbb{I})$ since
\begin{eqnarray*}
\mathbb{I}\ox U = U = U\ox \mathbb{I} \ , \ (U\ox V)\ox W = U\ox (V\ox W) \ .
\end{eqnarray*}
We will already refer to elements of $\CG_0$ as the `objects of $\CC[J^{\vee}]$'.

An object of $\CC[J^{\vee}]$ of special interest is 
\begin{eqnarray*}
J^{\vee} : [1] \to \ob\CC \ , \  J^{\vee}_0 = J^{\vee}_1 = \mathbb{I} \ .
\end{eqnarray*}
Then we see that each object $U$ of $\CC[J^{\vee}]$ is uniquely decomposable in the form
\begin{eqnarray}\label{oxdecomp}
U = U_0\ox J^{\vee}\ox U_1\ox J^{\vee}\ox \dots \ox J^{\vee}\ox U_m   
\end{eqnarray}
with $U_0, U_1, \dots, U_m \in \CC$.

Now we will describe a directed graph $\CG=(\CG_0,\CG_1)$.
We introduce two symbols 
\begin{eqnarray*}
J\ox J^{\vee}\xra{\varepsilon} \mathbb{I} \ \text{ and } \ \mathbb{I} \xra{\eta} J^{\vee}\ox J \ .
\end{eqnarray*}
For each morphism $f : C\to D$ of $\CC$, we introduce a symbol $\bar{f} : C\to D$. 
These symbols $\varepsilon$, $\eta$ and the $\bar{f}$ will be called the {\em primitive edges} of $\CG$. 
The general edges of $\CG$ are the whiskered primitive edges: that is, those of the form
\begin{eqnarray}\label{genedge}
U\ox A \ox V \xra{U\ox p\ox V} U\ox B \ox V
\end{eqnarray}
where $U, V\in \CG_0$, $A\xra{p}B$ is primitive, and the tensor in $U\ox p\ox V$ is formal. 
For a general edge \eqref{genedge} and $T, W\in \CG_0$, define 
$$T\ox (U\ox p\ox V) \ox W = (T\ox U)\ox p\ox (V \ox W)$$ 
where $T\ox U$ and $V\ox W$ are (tensor) products in $\CG_0$. 

In the free category $\CF\CG$ on the graph $\CG$, if $\ell = a_k\dots a_1$ with $a_i\in\CG_1$, 
we put $U\ox \ell \ox V = (U\ox a_k\ox V)\dots (U\ox a_1\ox V)$. 

We introduce the following relations on morphisms of $\CF\CG$, where $C,D,E,F\in\CC$ and $U,V,W,X,Y,Z\in\CG_0$:
\begin{itemize}
\item[(i)] for $f\in \CC (C,D)$,
$$(E\ox C \ox F \xra{\widebar{E\ox f\ox F})}E\ox D\ox F) \sim (E\ox C \ox F\xra{E\ox \bar{f}\ox F} E\ox D \ox F) $$
\item[(ii)] for $f\in \CC (C,D), g\in \CC (D,E)$, 
$$(C\xra{\widebar{1_C}}C) \sim (C\xra{1_{C}}C) \ \text{  and  } \  
(C\xra{\widebar{gf}}E) \sim (C\xra{\bar{f}} D \xra{\bar{g}}E)$$ 
\item[(iii)] for all edges $U\xra{a}V, W\xra{b}X$ of $\CG$,
\begin{eqnarray*}
{(U\ox W\xra{a \ox W}V\ox W\xra{V\ox b}V\ox X) 
\sim (U\ox W\xra{U\ox b}U\ox X\xra{a\ox X}V\ox X)}
\end{eqnarray*} 
\item[(iv)] $(J\xra{J\ox \eta \ox \mathbb{I}} J\ox J^{\vee} \ox J\xra{(\mathbb{I} \ox \varepsilon \ox J)} J) \sim 1_J$
\item[(v)] $(J^{\vee}\xra{(\mathbb{I}\ox \eta \ox J^{\vee})} J^{\vee}\ox J\ox J^{\vee}\xra{J^{\vee}\ox \varepsilon\ox \mathbb{I}} J^{\vee}) \sim 1_{J^{\vee}}$
\item[(vi)] if $\ell \sim \ell '$ is any of the relations in (i)-(v), then 
$$Y\ox \ell \ox Z \sim Y\ox \ell '\ox Z \ .$$  
\end{itemize} 
Put $\CC[J^{\vee}] = \CF \CG/\sim$ as a category. 
We write $\llbracket\ell\rrbracket : U \to V$ for the equivalence class of a morphism $\ell : U\to V$ in $\CF \CG$.

Put $T\ox \llbracket \ell \rrbracket \ox W = \llbracket T\ox \ell \ox W\rrbracket$.
We can write $\llbracket a\rrbracket\ox \llbracket b\rrbracket : U\ox W \to V\ox X$ for the equivalence class of both sides of relation (iii),
and this extends by composition to morphisms of $\CF \CG$ in place of $a, b$.  
It is because of (i), (ii), (iii) and (vi) that $\CC[J^{\vee}]$ is strict monoidal under $\ox$ with unit $\mathbb{I}$ and
the inclusion functor $\Omega : \CC\to \CC[J^{\vee}]$, defined by $\Omega C = C, \Omega f=\llbracket \bar{f}\rrbracket$,
is strict monoidal. 
By virtue of (iv) and (v), we have $J\dashv J^{\vee}$ in $\CC[J^{\vee}]$, and we will write the counit and unit as $\varepsilon$ and $\eta$ rather than $\llbracket \varepsilon \rrbracket$ and $\llbracket \eta\rrbracket$. 

\section{Universality of $\CC[J^{\vee}]$}\label{UniversalityC[J*]}

Let us define a 2-category $\mathfrak{sJ}$. The objects $(\CC,J)$ are strict monoidal
categories $\CC$ equipped with a distinguished object $J\in \CC$. 
The 1-morphisms $(F,\phi) : (\CC,J) \to (\CC',J')$ consist of a strict monoidal functor $F : \CC\to \CC'$ equipped with an isomorphism $\phi : FJ\cong J'$ in $\CC'$.
The 2-morphisms $\sigma : (F,\phi)\Ra (G,\psi) : (\CC,J) \to (\CC',J')$ are monoidal natural transformations
$\sigma : F\Ra G : \CC \to \CC'$ such that $\psi \circ \sigma_J = \phi$.

Let us also define a 2-category $\mathfrak{sJ}_{\vee}$. The objects $(\CC,J,K,\varepsilon)$ are strict monoidal
categories $\CC$ equipped with a distinguished duality $J\dashv K$ with its counit $\varepsilon$. 
The 1-morphisms $(F, \phi) : (\CC,J,K,\varepsilon) \to (\CC',J',K',\varepsilon')$ consist of a strict monoidal functor $F : \CC\to \CC'$ equipped with an isomorphism $\phi : FJ\cong J'$ in $\CC'$;
notice then that the mate of $\phi^{-1}$ is an isomorphism $FK\cong K'$.
The 2-morphisms $\sigma : (F, \phi)\Ra (G,\psi) : (\CC,J,K,\varepsilon) \to (\CC',J',K',\varepsilon')$ are monoidal natural transformations $\sigma : F\Ra G : \CC \to \CC'$ such that $\psi \circ \sigma_J = \phi$. 

There is a forgetful 2-functor $\mathfrak{sU} : \mathfrak{sJ}_{\vee} \to \mathfrak{sJ}$ taking
$\sigma : (F, \phi)\Ra (G,\psi) : (\CC,J,K,\varepsilon) \to (\CC',J',K',\varepsilon')$ to
$\sigma : (F, \phi)\Ra (G,\psi) : (\CC,J) \to (\CC',J')$.

\begin{proposition}\label{strictunivprop}
The 2-functor $\mathfrak{sU} : \mathfrak{sJ}_{\vee} \to \mathfrak{sJ}$ has a left biadjoint
whose value at $(\CC, J)$ is $(\CC[J^{\vee}], J, J^{\vee}, \varepsilon)$.
Moreover, $\Omega : (\CC, J) \to (\CC[J^{\vee}],J)$ is the component of the unit of the biadjunction. 
\end{proposition}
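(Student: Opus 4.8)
The plan is to establish the universal property in its representable form: for every object $(\CD, J', K', \varepsilon')$ of $\mathfrak{sJ}_{\vee}$ I would show that precomposition with $\Omega$ is an equivalence of categories
\begin{eqnarray*}
\mathfrak{sJ}_{\vee}\big((\CC[J^{\vee}], J, J^{\vee}, \varepsilon), (\CD, J', K', \varepsilon')\big) \lra \mathfrak{sJ}\big((\CC, J), (\CD, J')\big),
\end{eqnarray*}
pseudonatural in $(\CD, J', K', \varepsilon')$. By the standard representability criterion for biadjoints this exhibits $(\CC[J^{\vee}], J, J^{\vee}, \varepsilon)$ as the value of a left biadjoint to $\mathfrak{sU}$, with $\Omega$ as the unit. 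It then remains to prove that this functor is essentially surjective and fully faithful, and to record pseudonaturality; the last point is routine, since for a $1$-morphism $(H,\theta)$ of $\mathfrak{sJ}_{\vee}$ one has $(HG)\Omega = H(G\Omega)$ strictly, so postcomposition commutes with restriction along $\Omega$.

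For essential surjectivity I start from a $1$-morphism $(F, \phi) : (\CC, J) \to (\CD, J')$ and build a strict monoidal functor $\hat F : \CC[J^{\vee}] \to \CD$ with $\hat F \Omega = F$, exploiting the explicit presentation $\CC[J^{\vee}] = \CF\CG/\!\sim$. On objects I send the unique decomposition \eqref{oxdecomp} to $FU_0 \ox K' \ox FU_1 \ox \dots \ox K' \ox FU_m$; on primitive edges I set $\hat F\bar f = Ff$, let $\hat F\varepsilon$ be $\varepsilon'$ precomposed with $\phi \ox K'$, and let $\hat F\eta$ be the unit $\eta'$ of $J' \dashv K'$ postcomposed with $K' \ox \phi^{-1}$. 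Extending freely over $\CF\CG$ and verifying the relations (i)--(vi) is direct: (i) and (ii) hold because $F$ is a functor, (iii) because $\CD$ is strict monoidal, (vi) is the whiskering clause, and the snake relations (iv),(v) hold precisely because $J' \dashv K'$ is a genuine duality in $\CD$ with counit $\varepsilon'$. Taking $\phi$ as the structural isomorphism $\hat F J = FJ \cong J'$ yields a $1$-morphism of $\mathfrak{sJ}_{\vee}$ restricting to $(F,\phi)$.

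For full faithfulness I note first that a $2$-morphism $\sigma$ of $\mathfrak{sJ}_{\vee}$ is a monoidal natural transformation, so by monoidality and the decomposition \eqref{oxdecomp} it is completely determined by its components on objects of $\CC$ together with the single component $\sigma_{J^{\vee}}$. The defining constraint $\psi \circ \sigma_J = \phi$ forces $\sigma_J$ to be invertible, whereupon the Proposition of Section~\ref{dp} asserting that duals invert determines $\sigma_{J^{\vee}}$ from $\sigma_J$ through the mate correspondence. Faithfulness is then immediate. For fullness, given a monoidal natural transformation $\rho : G_1\Omega \Ra G_2\Omega$ with $\gamma_2 \circ \rho_J = \gamma_1$, I set $\sigma_C = \rho_C$ for $C \in \CC$, define $\sigma_{J^{\vee}}$ from the invertible $\rho_J$ by the same mate correspondence under the dualities $G_iJ \dashv G_iJ^{\vee}$, and extend to all objects by monoidality.

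The one genuine verification — and the step I expect to be the \textbf{main obstacle} — is that this $\sigma$ is natural with respect to \emph{all} morphisms of $\CC[J^{\vee}]$, not merely those in the image of $\Omega$. Since naturality squares compose and are preserved by whiskering with identity components (using monoidality of $\sigma$), it suffices to check naturality at the generating whiskered primitive edges, hence at $\bar f$, $\varepsilon$ and $\eta$. Naturality at $\bar f$ is exactly naturality of $\rho$ at $f$, while naturality at $\varepsilon$ and $\eta$ unwinds to the identities $G_2\varepsilon \circ (\sigma_J \ox \sigma_{J^{\vee}}) = G_1\varepsilon$ and $(\sigma_{J^{\vee}} \ox \sigma_J) \circ G_1\eta = G_2\eta$. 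These are precisely the mate equations, and they hold by the snake identities for $G_iJ \dashv G_iJ^{\vee}$ together with the chosen $\sigma_{J^{\vee}}$. As $\sigma$ then satisfies $\gamma_2 \circ \sigma_J = \gamma_1$ and restricts to $\rho$, fullness follows, completing the proof once pseudonaturality is noted.
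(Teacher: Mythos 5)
Your proposal is correct and follows essentially the same route as the paper's proof: the same representable formulation \eqref{biadjnproperty}, the same construction of $\hat F$ on the presentation $\CF\CG/\!\sim$ with $\hat F\varepsilon = \varepsilon'\circ(\phi\ox K')$ and $\hat F\eta = (K'\ox\phi^{-1})\circ\eta'$, and the same mate argument forcing $\sigma_{J^{\vee}}$ and hence the whole extension via \eqref{oxdecomp}. The only difference is that you spell out the naturality check at the generating edges $\bar f$, $\varepsilon$, $\eta$, which the paper compresses into the remark that naturality at primitive morphisms is straightforward.
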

\begin{proof}
Using \eqref{oxdecomp} and the nature of the relations (i)-(vi), we will prove that restriction
along $\Omega$ defines an equivalence of categories
\begin{eqnarray}\label{biadjnproperty}
\mathfrak{sJ}_{\vee}((\CC[J^{\vee}], J, J^{\vee}, \varepsilon),(\CX,H,K,\alpha)) \simeq 
\mathfrak{sJ}((\CC,J),(\CX,H))
\end{eqnarray}
which is surjective on objects.
To prove this surjectivity, take a strict monoidal functor $F : \CC\to \CX$ and $\phi : FJ \cong H$. 
Let $\beta : \mathbb{I} \to K\ox H$ be the unit corresponding to the counit $\alpha$ for the duality $H\dashv K$.
Then $\alpha'= \alpha \circ (\phi\ox K)$ and $\beta' = (K\ox \phi^{-1})\circ \beta$ are counit and unit for
a duality $FJ\dashv K$.
We can define a graph morphism $F' : \CG\to \CX$ by 
\begin{eqnarray*}
& U\mapsto FU_0\ox K\ox FU_1\ox K\ox \dots \ox K\ox FU_m   \\
\varepsilon \mapsto \alpha', & \eta \mapsto \beta', \ f \mapsto Ff, \ (U,p,V) \mapsto F'U\ox F'p\ox F'V 
\end{eqnarray*}
which we can immediately extend to a functor $\hat{F} : \CF \CG\to \CX$
by the universal property of the domain free category. 
Relations (i)-(iii) and (vi) are preserved by $\hat{F}$ since $F$ is strict monoidal;
relations (iv)-(v) are preserved since $\alpha'$ and $\beta'$ are counit and unit
for $FJ\dashv K$. So a functor $\bar{F} : \CC[J^{\vee}] \to \CX$ is induced.
Notice that $F'$ is a monoid morphism on the monoids of vertices of $\CG$ and the underlying graph of $\CX$ under tensor. With this, relations (iii) then imply that $\bar{F}$ is strict monoidal. By construction, we have $F = \bar{F}\Omega$. We also have $\phi : \bar{F}J = FJ \cong H$.

It remains to prove that restriction along $\Omega$ gives the fullness and faithfulness required for \eqref{biadjnproperty}. Take $(F, \phi), (G,\psi) : (\CC[J^\vee],J,J^\vee,\varepsilon) \to (\CX,H,K,\varepsilon)$
in $\mathfrak{sJ}_{\vee}$ and a monoidal natural transformation $\sigma : F\Omega \Ra G\Omega : \CC \to \CX$ with $\psi \circ \sigma_J = \phi$. 
We need to see that there is a unique $\bar{\sigma} : (F, \phi)\Ra (G,\psi)$ with $\bar{\sigma}\Omega=\sigma$.

 Since $\bar{\sigma}$ is to be monoidal and natural, we have commutativity of 
\begin{equation*}
\xymatrix{
FJ\ox FJ^{\vee} \ar[rr]^-{F\varepsilon} \ar[d]_-{\sigma_J\ox \bar{\sigma}_{J^{\vee}}} && \mathbb{I} \ar[d]^-{1_{\mathbb{I}}} \\
GJ\ox GJ^{\vee} \ar[rr]_-{G\varepsilon'} && \mathbb{I}}
\end{equation*} 
which shows that $\bar{\sigma}_{J^{\vee}}$ is forced to be the mate of 
$\sigma_J^{-1}= \phi^{-1}\circ \psi$. With this, we are now forced to put
\begin{eqnarray*}
\bar{\sigma}_U = \sigma_{U_0}\ox \bar{\sigma}_{J^{\vee}}\ox \sigma_{U_1}\ox \bar{\sigma}_{J^{\vee}}\ox \dots \ox \bar{\sigma}_{J^{\vee}}\ox \sigma_{U_m} \ .   
\end{eqnarray*}
With this as definition, the monoidality condition is obvious and naturality at equivalence classes of primitive morphisms is straightforward. We have the desired unique extension $\bar{\sigma}$. 
\end{proof}

Let us define a 2-category $\mathfrak{J}$. The objects $(\CC,J)$ are monoidal
categories $\CC$ equipped with a distinguished object $J\in \CC$. 
The 1-morphisms $(F,\phi) : (\CC,J) \to (\CC',J')$ consist of a strong monoidal functor $F : \CC\to \CC'$ equipped with an isomorphism $\phi : FJ\cong J'$ in $\CC'$.
The 2-morphisms $\sigma : (F,\phi)\Ra (G,\psi) : (\CC,J) \to (\CC',J')$ are monoidal natural transformations
$\sigma : F\Ra G : \CC \to \CC'$ such that $\psi \circ \sigma_J = \phi$.

Predictably, we also define a 2-category $\mathfrak{J}_{\vee}$. It is the full sub-2-category of $\mathfrak{J}$
consisting of those objects $(\CC,J)$ for which the distinguished object $J$ has a right dual.  
Let $\mathfrak{U} : \mathfrak{J}_{\vee} \to \mathfrak{J}$ be the inclusion 2-functor.

\begin{corollary}\label{flex}
The 2-functor $\mathfrak{U} : \mathfrak{J}_{\vee} \to \mathfrak{J}$ has a left biadjoint
whose value at $(\CC, J)$ is $(\CC[J^{\vee}], J)$.
Moreover, $\Omega : (\CC, J) \to (\CC[J^{\vee}],J)$ is the component of the unit of the biadjunction. 
\end{corollary}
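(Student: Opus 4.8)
The plan is to deduce the corollary from the strict universal property of Proposition~\ref{strictunivprop} by a ``flexibilisation'' of that result. Since $\mathfrak U$ is a full inclusion, producing a left biadjoint amounts to exhibiting a reflection: for each $(\CC,J)\in\mathfrak J$ we must give a reflecting object of $\mathfrak J_\vee$ together with a unit $1$-cell $\Omega$ for which restriction along $\Omega$ is an equivalence
$$\mathfrak J((\CC[J^\vee],J),(\CX,H))\;\simeq\;\mathfrak J((\CC,J),(\CX,H))$$
pseudonatural in every $(\CX,H)\in\mathfrak J_\vee$, that is, in every monoidal $\CX$ in which $H$ admits a right dual. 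For non-strict $\CC$ the symbol $\CC[J^\vee]$ is to be read through a strictification; so the first move is to reduce to strict $\CC$ and $\CX$ using that every monoidal category is monoidally equivalent to a strict one (Section~\ref{dp}), together with the observation that both sides of the displayed equivalence, and $\Omega$ itself, are invariant under replacing $\CC$ or $\CX$ by a monoidally equivalent category.

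After this reduction $\CC$ and $\CX$ are strict, but the $1$-cells of $\mathfrak J$ are \emph{strong} monoidal functors, whereas Proposition~\ref{strictunivprop} controls only \emph{strict} ones; this is the genuine gap, since a strong monoidal functor between strict monoidal categories need not be isomorphic to a strict one. I would therefore re-run the proof of Proposition~\ref{strictunivprop} with strong functors. On $2$-cells almost nothing changes: given strong $F,G:\CC[J^\vee]\to\CX$ and a monoidal natural $\sigma:F\Omega\Ra G\Omega$ with $\psi\circ\sigma_J=\phi$, the component $\sigma_J$ is invertible simply because $\sigma_J=\psi^{-1}\circ\phi$ is a composite of isomorphisms. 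As in Proposition~\ref{strictunivprop}, the naturality-and-monoidality square for $\varepsilon$ then forces $\bar\sigma_{J^\vee}$ to be the mate of $\sigma_J^{-1}$, and the decomposition \eqref{oxdecomp} forces $\bar\sigma$ on every object, yielding fullness and faithfulness.

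For essential surjectivity on objects I would proceed exactly as in Proposition~\ref{strictunivprop}: starting from a strong $F:\CC\to\CX$ with $\phi:FJ\cong H$, choose any right dual $K$ of $H$ with counit $\alpha$ and unit $\beta$, transport them along $\phi$ to a counit $\alpha'$ and unit $\beta'$ exhibiting $FJ\dashv K$, and define $\bar F:\CC[J^\vee]\to\CX$ on generators by $J^\vee\mapsto K$, $\bar f\mapsto Ff$, $\varepsilon\mapsto\alpha'$, $\eta\mapsto\beta'$, extending over $\CG$ and descending through the relations (i)--(vi). The difference from the strict case is that $\bar F$ is now only strong: its coherence constraints must be assembled from those of $F$ interleaved with identities on the adjoined $J^\vee$-factors, using \eqref{oxdecomp}. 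The choice of $K$ is harmless: right duals are unique up to canonical isomorphism, so different choices give monoidally isomorphic $\bar F$. This is precisely the point at which the principle that duals invert (the Proposition of Section~\ref{dp}) enters, making ``$H$ has a right dual'' property-like, so that the reflecting object may be taken to be $(\CC[J^\vee],J)\in\mathfrak J_\vee$ itself rather than a chosen-dual enrichment.

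The main obstacle is exactly this strong-versus-strict bookkeeping in the construction of $\bar F$: one must verify that the coherence isomorphisms of $F$, combined with identities on the adjoined duals according to \eqref{oxdecomp}, are compatible with composition and make the relations (i)--(iii) and (vi) hold, while the relations (iv)--(v) hold because $(\alpha',\beta')$ is a genuine counit--unit pair for $FJ\dashv K$. Granting this verification, naturality of the whole construction in $(\CX,H)$ supplies the required pseudonatural equivalence, hence the left biadjoint with unit $\Omega$; this is the flexible counterpart of Proposition~\ref{strictunivprop}.
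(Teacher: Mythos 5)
Your proposal is correct, but it takes a genuinely different route from the paper's. The paper proves this corollary in one line, by appealing to the standard theory of flexible categorical structures (two-dimensional monad theory: monoidal categories are the algebras of a 2-monad, strong monoidal functors the pseudomorphisms, and strict biadjunctions such as that of Proposition~\ref{strictunivprop} descend to biadjunctions at the pseudomorphism level); no computation is carried out there. You instead argue directly: a strictification reduction in both variables (legitimate, since the hom-categories, the unit $\Omega$, and the property of possessing a right dual are all invariant under monoidal equivalence, and since $\CC[J^{\vee}]$ must in any case be read through strictification when $\CC$ is not strict), followed by a re-run of the proof of Proposition~\ref{strictunivprop} with strong monoidal functors. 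You correctly isolate exactly where strictness was used: in constructing $\bar{F}$, whose value on whiskered generators must interleave the constraints $F_2,F_0$ of $F$ with identities on the adjoined dual factors according to \eqref{oxdecomp} before relations (i)--(vi) can be verified; and in the 2-cell argument, where your observation that $\sigma_J=\psi^{-1}\circ \phi$ is automatically invertible, so that $\bar{\sigma}_{J^{\vee}}$ is forced to be its mate, carries over essentially verbatim. Your route buys self-containedness --- no pseudomorphism classifiers, no flexibility --- at the price of the coherence bookkeeping you acknowledge but leave unexecuted; the paper's route buys brevity and exhibits the transfer as an instance of a general phenomenon rather than a feature of this particular construction. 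One simplification available to you: pseudonaturality in $(\CX,H)$ need not be checked separately, since every comparison functor you use is precomposition with the single fixed 1-cell $\Omega$, so birepresentability of each $\mathfrak{J}((\CC,J),\mathfrak{U}(-))$ already yields the biadjunction.
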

\begin{proof}
This is a standard consequence of Proposition~\ref{strictunivprop} when dealing with flexible categorical structures.
\end{proof}

\begin{remark}\label{treermk}
Consider the case where $\CC$ is the free strict monoidal category on a
single generating object; that is, it is the discrete category $\mathbb{N}$ with addition
as tensor product. 
Take $J$ to be the natural number $1$. 
We write $\mathbb{N}[1^{\vee}]$ for $\CC[J^{\vee}]$ in this case. 
Objects of $\mathbb{N}[1^{\vee}]$ can be identified with plane rooted trees of height $2$ (in the sense of \cite{Batanin1998}) and having at least one edge attached to the root. Tensor product in terms of rooted trees is as shown
in diagram \eqref{egtreeten}.
\begin{eqnarray}\label{egtreeten}
\begin{aligned}
\psscalebox{0.6 0.6} 
{
\begin{pspicture}(0,-1.40398)(17.051266,1.40398)
\psline[linecolor=black, linewidth=0.04](0.017122956,1.3898379)(1.657123,-1.330162)
\psline[linecolor=black, linewidth=0.04](0.91712296,-0.09016208)(1.757123,1.3698379)
\psline[linecolor=black, linewidth=0.04](1.657123,-1.330162)(2.437123,0.009687537)(2.4165967,1.3698379)
\rput[bl](3.6,-0.07016209){\LARGE{$\otimes$}}
\psline[linecolor=black, linewidth=0.04](5.177123,1.3498379)(6.757123,-1.3701621)(9.197123,1.3498379)
\psline[linecolor=black, linewidth=0.04](5.917123,0.049837913)(6.997123,1.3298379)
\psline[linecolor=black, linewidth=0.04](5.937123,0.029837914)(5.997123,1.3498379)
\psline[linecolor=black, linewidth=0.04](8.017123,0.029837914)(7.577123,1.3298379)
\rput[bl](9.5,0.009837913){\LARGE{$=$}}
\psline[linecolor=black, linewidth=0.04](10.537123,1.3698379)(13.437123,-1.2501621)(17.017122,1.3698379)(17.037123,1.3898379)
\psline[linecolor=black, linewidth=0.04](15.457123,0.18983792)(15.197123,1.3498379)
\psline[linecolor=black, linewidth=0.04](11.817123,0.24983792)(11.697123,1.3698379)
\psline[linecolor=black, linewidth=0.04](13.417123,0.28983793)(13.457123,-1.2501621)
\psline[linecolor=black, linewidth=0.04](12.197123,1.3298379)(13.417123,0.24983792)
\psline[linecolor=black, linewidth=0.04](12.917123,1.3298379)(13.417123,0.26983792)(13.797123,1.3498379)
\psline[linecolor=black, linewidth=0.04](13.457123,0.28983793)(14.517123,1.3698379)
\end{pspicture}
}
\end{aligned}
\end{eqnarray}
Notice that $\mathbb{N}[1^{\vee}]$ and the skeletal version of the 
monoidal category $\mathrm{Dpr}$
have the same universal property: strict monoidal functors from it
into a strict monoidal category $\CX$ are in bijection with duality pairs
in $\CX$. So the monoids of objects should be isomorphic.
The monoid of objects of this $\mathrm{Dpr}$ is the free
monoid $\{-,+\}^*$ on two symbols. It is easy to see directly that the
monoid morphism $\gamma : \{-,+\}^*\to \mathbb{N}[1^{\vee}]$ defined by
\begin{eqnarray*}
\begin{aligned}
\psscalebox{0.6 0.6} 
{
\begin{pspicture}(0,-1.0533887)(10.875672,1.0533887)
\psline[linecolor=black, linewidth=0.04](9.4,0.00972786)(10.1,-0.95027214)(10.86,0.00972786)
\psline[linecolor=black, linewidth=0.04](3.14,-1.0502721)(3.3,-0.030272141)(3.1,1.0497279)
\rput[bl](0.0,-0.11027214){\Large{$\gamma(-) \ =$}}
\rput[bl](5.76,-0.11027214){\Large{$\gamma(+) \ =$}}
\end{pspicture}
}
\end{aligned}
\end{eqnarray*}
is bijective. 
 
\end{remark}

\section{Fullness of $\Omega : \CC\to \CC[J^{\vee}]$}\label{Omegafullsection}

\begin{proposition}\label{Omegafull}
The inclusion functor $\Omega : \CC \to \CC[J^{\vee}]$ is full.  
\begin{proof}
Suppose $\ell : C \to D$ is a morphism in $\CC[J^{\vee}]$ with $C, D\in \CC$.
If $\ell$ has a representative path that passes through no vertex with a $J^{\vee}$ factor then
relations (i), (ii), (iii) imply that $\ell = \llbracket f\rrbracket$ for some $f\in \CC(C,D)$.
The only way that factors $J^{\vee}$ can be created in vertices in the path is by using the 
primitive edge $\eta$
and the only way they can be removed is by using the primitive edge $\varepsilon$.
So suppose our path representing $\ell$ has a vertex with a $J^{\vee}$ factor.
Then there must be an edge in the path of the form $ U\ox V \xra{U\ox \eta \ox V} U\ox J^{\vee}\ox J\ox V$. In order that the created $J^{\vee}$ in the target can be removed, the path must continue on as
\begin{eqnarray*}
U\ox J^{\vee}\ox J\ox V\xra{a\ox J^{\vee}\ox b} X\ox J\ox J^{\vee}\ox W\xra{X\ox \varepsilon \ox W}X\ox W \ .
\end{eqnarray*}
However, using relations (ii) and (iii), we have
\begin{eqnarray*}\lefteqn{
(X\ox \varepsilon\ox W)(a\ox J^{\vee}\ox b)(U\ox \eta \ox V)} \\
& \sim & (X\ox \varepsilon\ox W)(X\ox J\ox J^{\vee}\ox b)(a\ox J\ox V)(U\ox \eta \ox V) \\
& \sim & (X\ox b)(X\ox \varepsilon \ox J\ox V)(X\ox J\ox \eta\ox V)(a\ox V) \ . 
\end{eqnarray*}
Invoking relation (iv), we are left with $U\ox V\xra{a\ox V}X\ox J\ox V\xra{X\ox b}X\ox W$.
By induction, any path containing edges involving an $\eta$ is related to one not involving any.
We are back to the first sentence of the proof. This proves $\Omega$ is full.
\end{proof}
\end{proposition}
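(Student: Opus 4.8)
The plan is to show that every morphism $\ell : C \to D$ of $\CC[J^{\vee}]$ with $C,D\in\CC$ already lies in the image of $\Omega$, i.e.\ equals $\llbracket \bar f\rrbracket$ for some $f\in\CC(C,D)$. Since $\CC[J^{\vee}]=\CF \CG / {\sim}$, the morphism $\ell$ is the $\sim$-class of some path in the free category $\CF \CG$, built from whiskered primitive edges $\bar g$, $\eta$ and $\varepsilon$. The governing observation is that a factor $J^{\vee}$ can be \emph{created} in a vertex of the path only by an $\eta$-edge and can be \emph{removed} only by an $\varepsilon$-edge, while the $\bar g$-edges act inside a single $\CC$-block and never alter the pattern of $J$'s and $J^{\vee}$'s. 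I would therefore induct on the number of $\eta$-edges occurring in a chosen representative path for $\ell$.

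For the base case, suppose a representative path uses no $\eta$-edge. Then no vertex along the path acquires a $J^{\vee}$ factor beyond those of its endpoints; since the source $C$ has none, no $\varepsilon$-edge can occur either (an $\varepsilon$ needs a $J^{\vee}$ to consume), and the entire path consists of whiskered $\bar g$-edges. Relations~(i) and (iii) let me absorb all the whiskering into the $\CC$-labels, and relation~(ii) composes the resulting string of $\bar g$'s into a single $\bar f$ with $f\in\CC(C,D)$; hence $\ell=\llbracket \bar f\rrbracket=\Omega f$.

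For the inductive step, fix a representative path containing at least one $\eta$-edge, of the shape $U\ox V\xra{U\ox\eta\ox V}U\ox J^{\vee}\ox J\ox V$, and pair it with the $\varepsilon$-edge that eventually annihilates its $J^{\vee}$-strand. Choosing an \emph{innermost} such creation–annihilation pair, one enclosing no further cancellation on the intervening strand, the distinguished $J^{\vee}$ travels as a passive spectator and the path locally reads $U\ox J^{\vee}\ox J\ox V\xra{a\ox J^{\vee}\ox b}X\ox J\ox J^{\vee}\ox W\xra{X\ox\varepsilon\ox W}X\ox W$. I would then use the interchange relation~(iii) to commute the surrounding edges $a$ and $b$ past the spectator strand until the $\eta$ and $\varepsilon$ sit adjacent as the zig-zag $J\xra{J\ox\eta}J\ox J^{\vee}\ox J\xra{\varepsilon\ox J}J$, which relation~(iv) collapses to $1_J$. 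This rewrites $\ell$ as the class of a path with one fewer $\eta$-edge, and the induction closes, returning us to the base case.

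The step I expect to be the main obstacle is precisely this middle manoeuvre: making rigorous that the created $J^{\vee}$ really does persist as an untouched spectator between its birth and its death, and that the intervening edges can be slid past it by repeated use of~(iii) so that exactly one application of the snake identity~(iv) applies. This is where the choice of an innermost cancelling pair earns its keep, since it guarantees that nothing on that strand is created or destroyed in between; the residual content is then the careful rebracketing of tensor factors needed to exhibit the zig-zag, which is routine but must be tracked precisely.
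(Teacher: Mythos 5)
Your proposal is correct and follows essentially the same route as the paper's proof: induction on the number of $\eta$-edges, pairing a chosen $\eta$ with the $\varepsilon$ that consumes the $J^{\vee}$ it created, sliding the intervening edges past the spectator strand via the interchange relation (iii), and cancelling the resulting adjacent zig-zag by the snake relation (iv). The only divergence is your insistence on an \emph{innermost} creation--annihilation pair, which is not actually needed (and is not used by the paper): the distinguished $J^{\vee}$ factor is untouched between its birth and its death by the very definition of the matching $\varepsilon$, no matter what is created or destroyed to its left or right in between.
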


\begin{remark} 
Relation (v) was not used in this proof.
\end{remark}
 
\section{Faithfulness of $\Omega : \CC\to \CC [J^{\vee}]$}\label{Omegafaithfulsection}

Recall the bicategory $\mathrm{Mod}$ of categories and modules (called ``bimodules'' by Lawvere \cite{LawMetric} and ``distributors'' by B\'enabou \cite{Dist}) between them.
The objects are small categories. The hom categories are presheaf categories:
\begin{eqnarray*}
\mathrm{Mod}(\CA,\CB) = [\CB^{\mathrm{op}}\times \CA,\mathrm{Set}] \ .
\end{eqnarray*}
Composition is tensor product of modules:
\begin{eqnarray*}
(N\ox_{\CB}M)(C,A) =  \int^B{M(B,A)\times N(C,B)} \ .
\end{eqnarray*}
The identity module of $\CA$ will be denoted $H_{\CA}$ and is the hom presheaf: 
$H_{\CA}(A',A) = \CA(A',A)$. 
All right liftings and right extensions exist in the bicategory $\mathrm{Mod}$. 
For every functor $F : \CA\to \CB$, we obtain a module $F_* : \CA\to \CB$
and a module $F^* : \CB\to \CA$ with an adjunction $F_*\dashv F^*$ in the bicategory
$\mathrm{Mod}$; we have 
\begin{eqnarray*}
F_*(B,A) = \CB(B,FA) \ \text{ and } \ F^*(A,B) = \CB(FA,B) \ .
\end{eqnarray*}

So then we have, for each category $\CC$, a closed monoidal category $\mathrm{Mod}(\CC,\CC)$;
the tensor product is $\ox_{\CC}$ with unit $H_{\CC}$. If $\CC$ is monoidal then each object $C\in \CC$ determines a functor $X\ox- : \CC\to \CC$ and so a duality
\begin{eqnarray*}
(X\ox -)_*\dashv (X\ox -)^*
\end{eqnarray*}
in the monoidal category $\mathrm{Mod}(\CC,\CC)$.
This leads to the {\em Cayley functor} 
\begin{eqnarray}\label{Upsilon}
\Upsilon : \CC \to \mathrm{Mod}(\CC,\CC)
\end{eqnarray}
defined by
$$\Upsilon X(Z,Y) = (X\ox -)_*(Z,Y) = \CC (Z,X\ox Y) \ ,$$
while the effect on homs is the function 
\begin{eqnarray*}
\Upsilon : \CC (X,X') \to \int_{Z,Y}[\CC (Z,X\ox Y),\CC (Z,X'\ox Y)]
\end{eqnarray*}
taking $X\xra{f}X'$ to the family of functions $\CC (Z,X\ox Y)\xra{\CC (Z,f\ox Y)}\CC (Z,X'\ox Y)$. 
So $\Upsilon$ is {\em faithful and conservative}.  

We also have
$$\Upsilon^{\vee} X(Z,Y) = (X\ox -)^*(Z,Y) = \CC (X\ox Z,Y) $$
and the duality
$$\Upsilon X \dashv \Upsilon^{\vee}X$$
in $\mathrm{Mod}(\CC,\CC)$. 

What is more, $\Upsilon$ is strong monoidal. We have
$$\Upsilon \mathbb{I}(Z,Y) = \CC(Z,\mathbb{I}\ox Y) = H_{\CC}(Z,Y) $$
and
\begin{eqnarray*}\lefteqn{
(\Upsilon X\ox_{\CC}\Upsilon Y)(C,E)
 =  \int^{D}{(\Upsilon Y)(D,E)\times (\Upsilon X)(C,D)}} \\
& = & \int^D{\CC(D,Y\ox E)\times \CC(C,X\ox D)}
\cong  \CC(C, X\ox Y \ox E) \\
& = & \Upsilon (X\ox Y) \ .
 \end{eqnarray*}

By Corollary~\ref{flex}, there exists a strong monoidal functor
\begin{eqnarray}\label{Gamma}
\Gamma : \CC[J^{\vee}] \to \mathrm{Mod}(\CC,\CC)
\end{eqnarray}
such that
$\Gamma \Omega \cong \Upsilon$ and $\Gamma$ preserves the
duality $J\dashv J^{\vee}$; so $\Gamma J^{\vee} = \Upsilon^{\vee}J$
and $\Gamma\eta: H_{\CC}\Rightarrow \Upsilon^\vee J\otimes_{\CC}\Upsilon J$ is such that triangle \eqref{Gamma_eta} commutes for all $X,Z\in\ob\CC$.
\begin{eqnarray}\label{Gamma_eta}
\begin{aligned}
\xymatrix{
\CC(X,Z) \ar[rd]_{J\ox -}\ar[rr]^{\Gamma\eta}   && \int^Y{\CC(J\ox X,Y)\times \CC(Y,J\ox Z) }\ar[ld]^{\cong} \\
& \CC(J\ox X,J\ox Z)  &
}
\end{aligned}
\end{eqnarray}
 
\begin{theorem}\label{Omegaff}
The inclusion functor $\Omega : \CC \to \CC[J^{\vee}]$ is fully faithful.
\end{theorem}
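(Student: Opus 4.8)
The plan is to combine the fullness already established in Proposition~\ref{Omegafull} with a faithfulness argument that goes through the Cayley embedding, rather than attempting to analyze the equivalence relation $\sim$ directly. Since $\Omega$ is already known to be full, it suffices to prove that it is faithful. The obvious difficulty with a direct approach is that a morphism of $\CC[J^{\vee}]$ between objects of $\CC$ is an equivalence class of paths in $\CF\CG$, and two morphisms $f,g$ of $\CC$ could in principle be identified by $\Omega$ only after detouring through vertices carrying $J^{\vee}$ factors and invoking the snake relations (iv), (v); so the combinatorial description does not make faithfulness manifest. The transcendental route is to produce a \emph{faithful} functor out of $\CC[J^{\vee}]$ whose restriction along $\Omega$ is known to be faithful.

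First I would recall the two ingredients assembled just above the statement. The Cayley functor $\Upsilon : \CC \to \mathrm{Mod}(\CC,\CC)$ is strong monoidal and, crucially, faithful (indeed conservative). Moreover, because $\Upsilon$ sends the chosen object $J$ to a left dual $\Upsilon J \dashv \Upsilon^{\vee}J$ in the monoidal category $\mathrm{Mod}(\CC,\CC)$, the universal property of $\CC[J^{\vee}]$ (Corollary~\ref{flex}) yields a strong monoidal functor $\Gamma : \CC[J^{\vee}] \to \mathrm{Mod}(\CC,\CC)$ together with a natural isomorphism $\Gamma\Omega \cong \Upsilon$.

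Given these, the argument is immediate. Take parallel morphisms $f,g : C \to D$ in $\CC$ with $\Omega f = \Omega g$ in $\CC[J^{\vee}]$. Applying $\Gamma$ gives $\Gamma\Omega f = \Gamma\Omega g$, and transporting along the natural isomorphism $\Gamma\Omega \cong \Upsilon$ (whose naturality squares express each $\Upsilon h$ as the conjugate of $\Gamma\Omega h$) yields $\Upsilon f = \Upsilon g$. Faithfulness of $\Upsilon$ then forces $f = g$. Hence $\Omega$ is faithful, and together with Proposition~\ref{Omegafull} it is fully faithful.

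I would note where the actual weight of the proof sits: not in the final lines above, which are formal, but in the two facts imported from the preceding material, namely that $\Upsilon$ is faithful (a routine computation on hom-sets using $\Upsilon X(Z,Y) = \CC(Z,X\ox Y)$) and, more substantially, that $\Gamma$ exists with $\Gamma\Omega \cong \Upsilon$. The latter is exactly the payoff of having set up the universal property in Corollary~\ref{flex}: it lets us factor the faithful $\Upsilon$ through $\Omega$, and a functor whose composite with another functor is faithful is itself faithful. Thus the genuine obstacle is conceptual rather than computational — one must have recognized that $\mathrm{Mod}(\CC,\CC)$ supplies a faithful monoidal target in which $J$ acquires a dual, so that the opaque relations (iv) and (v) are automatically respected and cannot create unwanted coincidences among morphisms of $\CC$.
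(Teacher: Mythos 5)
Your proposal is correct and follows essentially the same route as the paper: the paper likewise invokes the faithful Cayley functor $\Upsilon : \CC \to \mathrm{Mod}(\CC,\CC)$, obtains $\Gamma : \CC[J^{\vee}] \to \mathrm{Mod}(\CC,\CC)$ with $\Gamma\Omega \cong \Upsilon$ from Corollary~\ref{flex}, and concludes faithfulness of $\Omega$ from that of $\Upsilon$, combining this with Proposition~\ref{Omegafull} for fullness. Your identification of where the real weight lies (the existence of $\Gamma$ via the universal property, with $\mathrm{Mod}(\CC,\CC)$ as a faithful monoidal target in which $J$ acquires a right dual) matches the paper's structure exactly.
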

\begin{proof}
Since $\Gamma \Omega \cong \Upsilon$, the fact that $\Upsilon$ is faithful implies $\Omega$ is too. We already know that $\Omega$ is full by Proposition~\ref{Omegafull}.
\end{proof}

\section{Coends for some homs of $\CC[J^{\vee}]$}\label{coends}

Some of the hom sets of $\CC[J^{\vee}]$ can be expressed as iterated coends over $\CC$, 
namely those which hom into or out of an object of $\CC$. 
\bigskip

Take objects $U : [m]\to \ob \CC$ and $V : [n]\to \ob \CC$.

For $m=n=0$, by Theorem~\ref{Omegaff}, we have 
\begin{eqnarray}\label{m=n=0}
\CC[J^{\vee}](U,V) \cong \CC(U_0,V_0) \ .
\end{eqnarray}

For $m>0, n=0$, there is a function
\begin{eqnarray}\label{m>0,n=0}
 \int^{X\in \CC}{\CC[J^{\vee}](U\partial_m,X\ox J)\times \CC(X\ox U_m,V_0)} \xra{\zeta_{m,0}} \CC[J^{\vee}](U,V)
\end{eqnarray}
taking the equivalence class of $(U\partial_m\xra{f}X\ox J, X\ox U_m\xra{g}V_0)$ to the composite
\begin{eqnarray*}
U=U\partial_m\ox J^{\vee}\ox U_m \xra{f\ox 1\ox 1} X\ox J\ox J^{\vee}\ox U_m\xra{1\ox \varepsilon \ox 1}X\ox U_m\xra{g} V_0=V \ .
\end{eqnarray*}

For $m=0, n>0$, there is a function
\begin{eqnarray}\label{m=0,n>0}
\int^{Y\in \CC}{\CC(U_0,V_0\ox Y)\times \CC[J^{\vee}](J\ox Y,V\partial_0)}\xra{\zeta_{0,n}}\CC[J^{\vee}](U,V)
\end{eqnarray}
taking the equivalence class of $(U_0\xra{f}V_0\ox Y, J\ox Y\xra{g}V\partial_0)$ to the composite
\begin{eqnarray*}
U=U_0 \xra{f} V_0\ox Y\xra{1\ox \eta \ox 1}V_0\ox J^{\vee}\ox J\ox Y\xra{1\ox 1\ox g} V_0\ox J^{\vee}\ox V\partial_0=V \ .
\end{eqnarray*}

For $m>0, n>0$, there is a function
\begin{eqnarray}\label{m,n>0}
\begin{aligned}
\int^{X,Y\in \CC}{\CC[J^{\vee}](U\partial_m,X\ox J)\times \CC(X\ox U_m,V_0\ox Y)\times \CC[J^{\vee}](J\ox Y,V\partial_0)} \\
\xra{\zeta_{m,n}}  \CC[J^{\vee}](U,V) 
\end{aligned} 
\end{eqnarray}
taking the equivalence class of $(U\partial_m\xra{f}X\ox J, X\ox U_m\xra{g}V_0\ox Y, J\ox Y\xra{h}V\partial_0)$ to the composite
\begin{eqnarray*}
U=U\partial_m\ox J^{\vee}\ox U_m \xra{f\ox 1\ox 1} X\ox J\ox J^{\vee}\ox U_m\xra{1\ox \varepsilon \ox 1}X\ox U_m\xra{g} V_0\ox Y \\
\xra{1\ox \eta\ox 1}V_0\ox J^{\vee}\ox J\ox Y\xra{1\ox 1\ox h} V_0\ox J^{\vee}\ox V\partial_0=V \ .
\end{eqnarray*}

Notice that $\zeta_{m,n}$ \eqref{m,n>0} is definitely not surjective in general.
For example, in the case of $\mathbb{N}[1^{\vee}]$ as in Remark~\ref{treermk},
the coend is empty for $U=V=1^{\vee}$. More generally, for most
morphisms of the form $f : J\ox X \to Y\ox J$ in $\CC$, the composite
\begin{eqnarray*}
X\ox J^{\vee}\xra{\eta \ox 1\ox 1} J^{\vee}\ox J\ox X\ox J^{\vee}\xra{1\ox f\ox 1}J^{\vee}\ox Y\ox J\ox J^{\vee}\xra{1\ox 1\ox \varepsilon} J^{\vee}\ox Y
\end{eqnarray*}
will not be in the image of $\zeta_{m,n}$.

We claim that $\zeta_{m,n}$ is invertible when either $m$ or $n$ is $0$. 

Let us look at the case of $\zeta_{0,n}$.
We begin by relating its domain to the functor $\Gamma$ of \eqref{Gamma}.
If $A\in \CC$ and $W : [p] \to \CC$ in $\CC[J^{\vee}]$ then
\begin{eqnarray}\label{bigcoend}
\lefteqn{
(\Gamma W)(A,\mathbb{I})}  \nonumber \\
& = & (\Gamma W_0\ox_{\CC}\Gamma J^{\vee}\ox_{\CC}\Gamma W_1\ox_{\CC}\Gamma J^{\vee}\ox_{\CC}\dots \ox_{\CC}\Gamma W_p) (A,\mathbb{I}) \nonumber \\
& = & (\Upsilon W_0\ox_{\CC}\Upsilon^{\vee} J\ox_{\CC}\Upsilon W_1\ox_{\CC}\Upsilon^{\vee} J\ox_{\CC}\dots \ox_{\CC}\Upsilon W_p) (A,\mathbb{I}) \\
 &\cong &  \int^{Y_1\dots Y_p\in \CC}{\CC (A,W_0\ox Y_1)\times \CC (J\ox Y_1,W_1\ox Y_2)\times\dots\times \CC (J\ox Y_p,W_p) \nonumber}
\end{eqnarray} 
where the last step involves the coend form of the Yoneda Lemma
and $W_p\ox \mathbb{I} = W_p$. Define the function
\begin{eqnarray}\label{zeta0.components}
\zeta_{0,\centerdot} : \Gamma W(A,\mathbb{I}) \lra \CC[J^{\vee}](A,W)
\end{eqnarray}
to be the composite of the isomorphism \eqref{bigcoend} with the function which
takes the equivalence class of 
\begin{eqnarray*}
(A\xra{f_0}W_0\ox Y_1, J\ox Y_1\xra{f_1}W_1\ox Y_2,\dots, J\ox Y_p\xra{f_p}W_p)
\end{eqnarray*}
to the composite $\zeta_{0,\centerdot}[f_0,f_1,\dots ,f_p] : =$ 
\begin{eqnarray*}
A\xra{f_0}W_0\ox Y_1\xra{1\ox\eta\ox 1}W_0\ox J^{\vee}\ox J\ox Y_1 \xra{1\ox 1\ox f_2}W_0\ox J^{\vee}\ox W_1\ox Y_2\ra \dots \\
\xra{1\ox\eta\ox 1} W\partial_p\ox J^{\vee}\ox J\ox Y_p\xra{1\ox f_p} W \ .
\end{eqnarray*}
It is clear that the above is well-defined and moreoever, the functions \eqref{zeta0.components} are the components of
a natural transformation 
\begin{eqnarray}\label{zeta0.nattran}
\begin{aligned}
\xymatrix{
\CC[J^{\vee}]\ar[rd]_{\tilde{\Omega}}^(0.5){\phantom{aaaa}}="1" \ar[rr]^{\Gamma}  && \mathrm{Mod}(\CC,\CC) \ar[ld]^{\mathrm{Ev}_{\mathbb{I}}}_(0.5){\phantom{aaaa}}="2" \ar@{<=}"1";"2"_-{\zeta_{0,\centerdot}}
\\
& [\CC^{\mathrm{op}},\mathrm{Set}] 
}
\end{aligned}
\end{eqnarray}
where $\tilde{\Omega}W = \CC[J^{\vee}](\Omega -,W)$ and
$\mathrm{Ev}_{\mathbb{I}}M = M(-,\mathbb{I})$. 
 \begin{theorem}\label{m=0Thm}
The natural transformation $\zeta_{0,\centerdot}$ in \eqref{zeta0.nattran} is invertible. 
\end{theorem}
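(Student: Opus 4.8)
The plan is to prove that for each $A\in\CC$ and each object $W$ of $\CC[J^{\vee}]$ the component $\zeta_{0,\centerdot}\colon \Gamma W(A,\mathbb{I})\to \CC[J^{\vee}](A,W)$ is a bijection; naturality in $A$ and $W$ has already been arranged before the statement, so objectwise bijectivity is all that remains. I would write $W$ in its canonical form \eqref{oxdecomp}, $W=W_0\ox J^{\vee}\ox W_1\ox\dots\ox J^{\vee}\ox W_p$, and induct on the number $p$ of $J^{\vee}$-factors. The base case $p=0$ is immediate: here $W=W_0\in\CC$, so $\Gamma W_0\cong\Upsilon W_0$ gives $\Gamma W_0(A,\mathbb{I})=\CC(A,W_0\ox\mathbb{I})=\CC(A,W_0)$, and $\zeta_{0,\centerdot}$ is exactly the identification $\CC(A,W_0)\cong\CC[J^{\vee}](A,W_0)$ furnished by Theorem~\ref{Omegaff}.

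For the inductive step I would exploit the recursive shape of the coend \eqref{bigcoend}. Writing $W'=W_1\ox J^{\vee}\ox\dots\ox J^{\vee}\ox W_p$, which has $p-1$ factors, and applying Fubini to split off the variable $Y_1$, the inner coend is precisely $\Gamma W'(J\ox Y_1,\mathbb{I})$, so
\[
\Gamma W(A,\mathbb{I})\;\cong\;\int^{Y_1\in\CC}\CC(A,W_0\ox Y_1)\times \Gamma W'(J\ox Y_1,\mathbb{I}).
\]
The inductive hypothesis, applied with the object $A$ replaced by $J\ox Y_1\in\CC$, identifies $\Gamma W'(J\ox Y_1,\mathbb{I})$ with $\CC[J^{\vee}](J\ox Y_1,W')$. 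Thus the whole theorem reduces to proving the single ``one-step'' formula
\[
\CC[J^{\vee}](A,W_0\ox J^{\vee}\ox W')\;\cong\;\int^{Y\in\CC}\CC(A,W_0\ox Y)\times \CC[J^{\vee}](J\ox Y,W'),
\]
naturally and compatibly with $\zeta_{0,\centerdot}$, where the comparison map sends a pair $(f\colon A\to W_0\ox Y,\;g\colon J\ox Y\to W')$ to the composite $(W_0\ox J^{\vee}\ox g)\circ(W_0\ox\eta\ox Y)\circ f$.

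To establish this one-step formula I would argue directly from the graph-and-relations presentation of $\CC[J^{\vee}]$, extending the reduction used in Proposition~\ref{Omegafull}. For surjectivity: given a morphism $A\to W_0\ox J^{\vee}\ox W'$, the leftmost $J^{\vee}$ occurring in the codomain is absent from the domain $A\in\CC$, so along any representative path it must be created by a primitive edge $\eta$; normalising the path as in the fullness proof shows the morphism factors as $A\to W_0\ox Y\xra{W_0\ox\eta\ox Y}W_0\ox J^{\vee}\ox J\ox Y\to W_0\ox J^{\vee}\ox W'$, which is exactly the image of some pair $(f,g)$ under the comparison map. For the coend identification one then checks that two pairs determine the same morphism of $\CC[J^{\vee}]$ precisely when they are linked by the wedge relations defining the coend, that is, by the evident action of a morphism $Y\to Y_1$ of $\CC$ on the threaded object.

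The main obstacle is this last point: proving that the \emph{only} identifications among the pairs $(f,g)$ are those forced by the coend. This injectivity half of the one-step formula is where relation (iv) (the triangle identity) does the real work, letting one slide the created $\eta$ past the remainder of the diagram and recognise two factorisations of a single morphism as one zig-zag of coend wedges. Showing that the reduction to the displayed factored form is both always possible and essentially unique -- with no further collapsing of the coend -- is the delicate combinatorial heart of the argument; the surrounding steps are formal coend manipulation together with the already-established full faithfulness of $\Omega$.
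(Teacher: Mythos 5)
Your reduction is sound as far as it goes: peeling off the leftmost $J^{\vee}$ with Fubini, invoking the inductive hypothesis at the objects $J\ox Y_1$ (naturality in $Y_1$ makes this legitimate inside the coend), and reducing everything to the one-step formula
\[
\CC[J^{\vee}](A,W_0\ox J^{\vee}\ox W')\;\cong\;\int^{Y\in\CC}\CC(A,W_0\ox Y)\times \CC[J^{\vee}](J\ox Y,W')
\]
is a correct strategy, and your comparison map is the right one. But the proof of this formula is missing, and what is missing is not a routine verification --- it is the entire content of the theorem. You propose to prove surjectivity by ``normalising the path as in the fullness proof'' and injectivity by checking that two factorisations $(f,g)$, $(f_1,g_1)$ of the same morphism are always linked by coend wedges, and you yourself flag the latter as ``the delicate combinatorial heart of the argument.'' Proposition~\ref{Omegafull}, however, only yields \emph{existence} of reductions (every path is related to one in a convenient shape); it gives no control over \emph{which} distinct normal forms become identified under the relations (i)--(vi). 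Establishing that the only identifications among pairs $(f,g)$ are the wedge relations is a uniqueness-of-normal-form statement for a category presented by generators and relations, which would require a genuine rewriting/confluence analysis that you have not supplied. Note that the paper deliberately avoids exactly this kind of analysis even for the weaker statement that $\Omega$ is faithful (Section~\ref{Omegafaithfulsection} is explicitly a ``transcendental argument''); your injectivity claim is strictly stronger than faithfulness of $\Omega$, so it cannot be expected to come cheaply.

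The paper's proof sidesteps the combinatorics entirely by using the representation $\Gamma : \CC[J^{\vee}]\to\mathrm{Mod}(\CC,\CC)$ of \eqref{Gamma} as a semantic probe: one defines an explicit candidate inverse
\[
\zeta'_{0,\centerdot} : \CC[J^{\vee}](A,W)\xra{\Gamma}\int_{X,Z}{[\CC(X,A\ox Z),\Gamma W(X,Z)]} \cong \int_Z{\Gamma W(A\ox Z,Z)}\xra{\mathrm{Ev}_{\mathbb{I}}} \Gamma W(A,\mathbb{I}) \ ,
\]
proves $\zeta_{0,\centerdot}\circ\zeta'_{0,\centerdot}=1$ by naturality in $W$ plus the Yoneda lemma (reducing to the value at $1_A$ with $W=A$), and proves $\zeta'_{0,\centerdot}\circ\zeta_{0,\centerdot}=1$ by a direct computation using $\Gamma\Omega\cong\Upsilon$, strong monoidality of $\Gamma$, and the formula \eqref{Gamma_eta} for $\Gamma\eta$. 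In effect, $\Gamma$ supplies exactly the invariant that distinguishes inequivalent pairs $(f,g)$, which is what your combinatorial approach lacks. If you want to salvage your inductive scheme, the cleanest repair is to prove your one-step formula by this same semantic method --- at which point the induction becomes unnecessary, since the argument handles all $p$ at once.
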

 \begin{proof}
We will show that the composite $\zeta'_{0,\centerdot} : =$
\begin{eqnarray*}
\CC[J^{\vee}](A,W)\xra{\Gamma}\int_{X,Z}{[\CC(X,A\ox Z),\Gamma W(X,Z)]} \cong \int_Z{\Gamma W(A\ox Z,Z)}\xra{\mathrm{Ev}_{\mathbb{I}}} \Gamma W(A,\mathbb{I})
\end{eqnarray*}
is the inverse to $\zeta_{0,\centerdot}$. The composite
\begin{eqnarray*}
\CC[J^{\vee}](A,W)\xra{\zeta'_{0,\centerdot}}\Gamma W(A,I)\xra{\zeta_{0,\centerdot}}\CC[J^{\vee}](A,W)
\end{eqnarray*}
is natural in $W$ and so, by Yoneda, is determined by the value
at the identity of $W = A$ under the composite
\begin{eqnarray*}
\CC[J^{\vee}](A,A)\xra{\zeta'_{0,\centerdot}}\CC(A,A)\xra{\zeta_{0,\centerdot}}\CC[J^{\vee}](A,A) \ .
\end{eqnarray*}
The composite takes $1_A$ to itself yielding $\zeta_{0,\centerdot}\circ \zeta'_{0,\centerdot} = 1_{\CC[J^{\vee}](A,W)}$.  

We now look at the composite
\begin{eqnarray*}
\Gamma W(A,\mathbb{I}) \xra{\zeta_{0,\centerdot}}\CC[J^{\vee}](A,W)\xra{\zeta'_{0,\centerdot}}\Gamma W(A,\mathbb{I})
\end{eqnarray*}
as transported across the isomorphism \eqref{bigcoend}.
(Abusing notation, we use the same symbols for the transported functions.)
This is performed by applying $\Gamma$ to the morphism $\zeta_{0,\centerdot}[f_0,f_1,\dots ,f_p]$ which involves morphisms
in the image of $\Omega$ and the morphism $\eta$ in $\CC[J^{\vee}]$.
Making use of $\Gamma\Omega =\Upsilon$, strong monoidality of $\Gamma$
and the value of $\Gamma$ on $\eta$ as given by \eqref{Gamma_eta},
we see with some effort that $\zeta'_{0,\centerdot}\zeta_{0,\centerdot}[f_0,f_1,\dots ,f_p]= [f_0,f_1,\dots ,f_p]$, as required. 

It may be helpful for us to give more detail in the case $p=1$ to indicate what is involved in general. We need to show that the composite
\begin{eqnarray*}
\int^{Y\in \CC}{\CC(A,B\ox Y)\times \CC(J\ox Y,C)}\xra{\zeta_{0,1}}\CC[J^{\vee}](A,B\ox J^{\vee}\ox C) \xra{\Gamma} \\
\int_{X,Z}{\Big[ \CC(X,A\ox Z),\int^{Y_1,Y_2}{\CC(X,B\ox Y_1)\times \CC(J\ox Y_1,Y_2)\times \CC(Y_2,C\ox Z)}\Big] } \xra{\cong}\\
\int_Z{\int^{Y_1}{\CC(A\ox Z,B\ox Y_1)\times \CC(J\ox Y_1,J\ox C\ox Z)}} \xra{\mathrm{Ev}_{\mathbb{I}}} \phantom{AAAA} \\
\int^{Y\in \CC}{\CC(A,B\ox Y)\times \CC(J\ox Y,C)} \phantom{AAAAAAAAAA}
\end{eqnarray*}
is the identity. (It is actually possible in this $p=1$ case to use Yoneda to check only that, taking $A=B\ox Y$ and $J\ox Y = C$, the equivalence class of $(1_A,1_C)$ goes to itself. However, this possibility
is not fully available for $p>1$.) Take $[A \xra{f} B\ox Y, J\ox Y\xra{g}C]$
in the domain. Under $\zeta_{0,1}$ it goes to the composite
$A\xra{f}B\ox Y\xra{1\ox \eta \ox 1}B\ox J^{\vee}\ox J \ox Y\xra{1\ox 1 \ox g} B\ox J^{\vee}\ox C$. So the component of the natural transformation $\Gamma \zeta_{0,1} [f,g]$ at $(X,Z)$ is the composite
\begin{eqnarray*}
\CC(X,A\ox Z)\xra{\CC(1,f\ox 1)}\CC(X,B\ox Y\ox Z)\xra{\Gamma(1\ox \eta \ox 1)}
\\ \int^{Y_1,Y_2}{\CC(X,B\ox Y_1)\times \CC(J\ox Y_1,J\ox Y_2)\times \CC(Y_2,Y\ox Z)}\xra{\cong} \\
\int^{Y_1}{\CC(X,B\ox Y_1)\times \CC(J\ox Y_1,J\ox Y\ox Z)}\xra{1\times \CC(1,g\ox 1)} \\
\int^{Y_1}{\CC(X,B\ox Y_1)\times \CC(J\ox Y_1,C\ox Z)} \phantom{AAAAAA}
\end{eqnarray*}
 which, using \eqref{Gamma_eta}, corresponds to the element 
 $$([A\ox Z\xra{f\ox 1}B\ox Y\ox Z,J\ox Y\ox Z\xra{g\ox 1}C\ox Z])_Z$$ of 
 $$\int_Z{\int^{Y_1}{\CC(A\ox Z,B\ox Y_1)\times \CC(J\ox Y_1,J\ox C\ox Z)}} \ .$$
 The component at the index $Z=\mathbb{I}$ is our original $[A \xra{f} B\ox Y, J\ox Y\xra{g}C]$.
 \end{proof}
\begin{corollary}
The morphism $\zeta_{0,n}$ \eqref{m=0,n>0} is invertible for all $n > 0$.
\end{corollary}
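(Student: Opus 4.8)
The plan is to exhibit $\zeta_{0,n}$ from \eqref{m=0,n>0} as nothing but a regrouping of the natural transformation $\zeta_{0,\centerdot}$ that Theorem~\ref{m=0Thm} has just shown to be invertible, so that invertibility of $\zeta_{0,n}$ follows formally. Since $m=0$ here, I write $A=U_0\in\CC$ for the object $U$. First I would pin down the codomain: applying Theorem~\ref{m=0Thm} with $A=U_0$ and $W=V$ of length $p=n$ gives an isomorphism $\zeta_{0,\centerdot}:\Gamma V(U_0,\mathbb{I})\xra{\cong}\CC[J^{\vee}](U_0,V)$, and \eqref{bigcoend} rewrites the source as the full iterated coend
\begin{eqnarray*}
\Gamma V(U_0,\mathbb{I})\cong\int^{Y_1\dots Y_n}{\CC(U_0,V_0\ox Y_1)\times\CC(J\ox Y_1,V_1\ox Y_2)\times\dots\times\CC(J\ox Y_n,V_n)}\ .
\end{eqnarray*}

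Next I would rewrite the domain of $\zeta_{0,n}$. Its second tensor factor $\CC[J^{\vee}](J\ox Y,V\partial_0)$ is a hom out of the object $J\ox Y\in\CC$ into the object $V\partial_0$ of length $n-1$, so Theorem~\ref{m=0Thm} applies once more, now with $A=J\ox Y$ and $W=V\partial_0$; expanding by \eqref{bigcoend} and writing $Y=Y_1$ yields
\begin{eqnarray*}
\CC[J^{\vee}](J\ox Y_1,V\partial_0)\cong\int^{Y_2\dots Y_n}{\CC(J\ox Y_1,V_1\ox Y_2)\times\dots\times\CC(J\ox Y_n,V_n)}\ .
\end{eqnarray*}
Substituting this into the domain $\int^{Y_1}{\CC(U_0,V_0\ox Y_1)\times\CC[J^{\vee}](J\ox Y_1,V\partial_0)}$ of $\zeta_{0,n}$ and invoking the Fubini theorem for coends identifies that domain with the very same full iterated coend, that is, with $\Gamma V(U_0,\mathbb{I})$. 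No induction is needed, as Theorem~\ref{m=0Thm} is already available at every length.

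It remains to check that under these two identifications $\zeta_{0,n}$ becomes $\zeta_{0,\centerdot}$, and this is where the only genuine work lies; it is a direct unwinding of the defining composites rather than a new idea. Writing $g=\zeta_{0,\centerdot}[f_1,\dots,f_n]:J\ox Y_1\to V\partial_0$ for the image of the tail under the level-$(n-1)$ isomorphism, the formula \eqref{m=0,n>0} sends $(f_0,g)$ to $(1_{V_0\ox J^{\vee}}\ox g)\circ(1\ox\eta\ox 1)\circ f_0$. Since $\zeta_{0,\centerdot}$ in \eqref{zeta0.components} is defined by a left-to-right chain of $\eta$-insertions and $f_i$-applications, and $1_{V_0\ox J^{\vee}}\ox g$ merely whiskers $V_0\ox J^{\vee}\ox-$ onto each stage of $g$, this composite is precisely $\zeta_{0,\centerdot}[f_0,f_1,\dots,f_n]$; compatibility with the coend wedges is inherited from the well-definedness of $\zeta_{0,\centerdot}$ already established. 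Hence $\zeta_{0,n}$ agrees with the invertible $\zeta_{0,\centerdot}$ up to the canonical isomorphisms on its domain, and is therefore invertible. The hard part is exactly the bookkeeping of this matching, which rests on the strict functoriality of $\ox$ (strong monoidality of $\Gamma$) and the whiskering behaviour of $\zeta_{0,\centerdot}$.
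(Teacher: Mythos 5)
Your proof is correct and follows essentially the same route as the paper: both arguments apply Theorem~\ref{m=0Thm} twice (once to the hom out of $J\ox Y$ inside the coend, once to the hom $\CC[J^{\vee}](U_0,V)$ itself) so as to identify the domain of $\zeta_{0,n}$ with $\Gamma V(U_0,\mathbb{I})$ and conclude invertibility from that of $\zeta_{0,\centerdot}$. The only cosmetic differences are that the paper performs the middle identification via strong monoidality of $\Gamma$ and Yoneda at the level of modules where you use the iterated-coend formula \eqref{bigcoend} plus Fubini, and that you verify on representatives the compatibility which the paper merely asserts with the phrase ``whose composite is $\zeta_{0,n}$''.
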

\begin{proof}
Using Theorem~\ref{m=0Thm} twice, we obtain isomorphisms
\begin{eqnarray*}
\lefteqn{
\int^{Y\in \CC}{\CC(U_0,V_0\ox Y)\times \CC[J^{\vee}](J\ox Y,V\partial_0)}} \\
& \cong & \int^{Y\in \CC}{\CC(U_0,V_0\ox Y)\times \Gamma(V\partial_0)(J\ox Y,\mathbb{I})} \\
& \cong & \int^{Y,Y'\in \CC}{(\Gamma V_0)(U_0,Y)\times (\Gamma J^{\vee})(Y,Y') \times \Gamma(V\partial_0)(Y',\mathbb{I})} \\
& \cong & \Gamma(V_0\ox J^{\vee} \ox V\partial_0)(U_0,\mathbb{I}) 
\cong \Gamma V(U_0,\mathbb{I}) \\
& \cong & \CC[J^{\vee}](U,V) 
\end{eqnarray*}
whose composite is $\zeta_{0,n}$.
\end{proof}

Similarly, we have functions $\Gamma U(\mathbb{I},A)\to \CC[J^{\vee}](U,\Omega A)$ which are the components of
a natural isomorphism 
\begin{eqnarray}\label{zetam.0attran}
\begin{aligned}
\xymatrix{
\CC[J^{\vee}]\ar[rd]_{\Omega^{\dagger}}^(0.5){\phantom{aaaa}}="1" \ar[rr]^{\Gamma}  && \mathrm{Mod}(\CC,\CC) \ar[ld]^{\mathrm{Ev}'_{\mathbb{I}}}_(0.5){\phantom{aaaa}}="2" \ar@{<=}"1";"2"_-{\zeta_{\centerdot,0}}
\\
& [\CC,\mathrm{Set}]^{\mathrm{op}} 
}
\end{aligned}
\end{eqnarray}
where $\Omega^{\dagger}W = \CC[J^{\vee}](W, \Omega -)$ and
$\mathrm{Ev}'_{\mathbb{I}}M = M(\mathbb{I},-)$.

\section{$\CC[J^{\vee}]$ from $\mathbb{N}[1^{\vee}] \simeq \mathrm{Dpr}$}\label{C[J^]from[N1^]}\label{RemPushN}
We use the notation of Remark~\ref{treermk}.
The (bicategorical) initial object of $\mathfrak{J}$ is $(\mathbb{N}, 1)$
while that of $\mathfrak{J}_{\vee}$ is $(\mathbb{N}[1^{\vee}], 1)$.
For any $(\CC,J)\in \mathfrak{J}_{\vee}$, we write $J^{\vee}$ for a choice of right dual for $J$.

Binary (bicategorical) coproduct in $\mathfrak{J}$ will be denoted by 
\begin{eqnarray*}
(\CC,J)\xra{\mathrm{in}_1} (\CC +_{\mathbb{N}} \CD,J) \xla{\mathrm{in}_2} (\CD,K) \ .
\end{eqnarray*}
We point out that, if either $J\in \CC$ or $K\in \CD$ has a right dual then $J\in \CC +_{\mathbb{N}} \CD$
has a right dual. 
\begin{proposition}\label{coprodformula}
For each object $(\CC,J)\in \mathfrak{J}$, there is an equivalence 
\begin{eqnarray*}
(\CC[J^{\vee}], J)\simeq (\mathbb{N}[1^{\vee}] +_{\mathbb{N}} \CC,1)
\end{eqnarray*}
in $\mathfrak{J}_{\vee}$.
\end{proposition}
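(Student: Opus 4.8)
The plan is to establish the equivalence by comparing universal properties rather than attempting to build an explicit functor by hand. Both sides of the claimed equivalence live in the 2-category $\mathfrak{J}_{\vee}$, and the cleanest route is to show that $(\CC[J^{\vee}], J)$ and $(\mathbb{N}[1^{\vee}] +_{\mathbb{N}} \CC, 1)$ corepresent the same (pseudo)functor on $\mathfrak{J}_{\vee}$. By the uniqueness (up to equivalence) of objects with a given universal property, this forces the two to be equivalent in $\mathfrak{J}_{\vee}$.

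First I would unwind the universal property of the pseudopushout $\mathbb{N}[1^{\vee}] +_{\mathbb{N}} \CC$. Since $(\mathbb{N}[1^{\vee}],1)$ is the bicategorical initial object of $\mathfrak{J}_{\vee}$ and $(\mathbb{N},1)$ is the bicategorical initial object of $\mathfrak{J}$, the coproduct square
\begin{equation*}
\xymatrix{
(\mathbb{N},1) \ar[r] \ar[d] & (\CC, J) \ar[d]^{\mathrm{in}_2} \\
(\mathbb{N}[1^{\vee}],1) \ar[r]_-{\mathrm{in}_1} & (\mathbb{N}[1^{\vee}] +_{\mathbb{N}} \CC, 1)
}
\end{equation*}
exhibits $\mathbb{N}[1^{\vee}] +_{\mathbb{N}} \CC$ as the binary coproduct in $\mathfrak{J}$ over the common initial object $(\mathbb{N},1)$; concretely, for any $(\CX, H, K, \alpha) \in \mathfrak{J}_{\vee}$, a map out of the pushout amounts to a map $(\CC, J) \to (\CX, H)$ in $\mathfrak{J}$ together with the (essentially unique, by initiality) map $(\mathbb{N}[1^{\vee}], 1) \to (\CX, H)$ in $\mathfrak{J}_{\vee}$, compatibly. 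The key observation is that the factor coming from $\mathbb{N}[1^{\vee}]$ carries no data beyond what is already determined: its universal property says precisely that strong monoidal functors out of it picking out $1$ correspond to a choice of dual pair, and this is exactly the structure that promotes a map into $\CX$ to the category $\mathfrak{J}_{\vee}$. Thus corepresentation by $\mathbb{N}[1^{\vee}] +_{\mathbb{N}} \CC$ reduces to corepresentation by $(\CC, J)$ equipped with a dual for the image of $J$.

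Next I would invoke Corollary~\ref{flex}, which states that $\mathfrak{U} : \mathfrak{J}_{\vee} \to \mathfrak{J}$ has a left biadjoint with value $(\CC[J^{\vee}], J)$ at $(\CC, J)$. Spelling out the biadjunction gives, for each $(\CX, H, K, \alpha) \in \mathfrak{J}_{\vee}$, an equivalence of hom-categories
\begin{equation*}
\mathfrak{J}_{\vee}\big((\CC[J^{\vee}], J), (\CX, H, K, \alpha)\big) \simeq \mathfrak{J}\big((\CC, J), (\CX, H)\big),
\end{equation*}
pseudonatural in the target. Comparing this with the description of maps out of the pushout obtained in the previous paragraph, I would check that both present the same pseudofunctor $\mathfrak{J}_{\vee} \to \mathrm{Cat}$, namely $(\CX, H, K, \alpha) \mapsto \mathfrak{J}((\CC,J),(\CX,H))$. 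The desired equivalence then follows from the bicategorical Yoneda lemma: two objects corepresenting equivalent pseudofunctors are equivalent, and the equivalence may be taken in $\mathfrak{J}_{\vee}$ and to send $1$ to $J$ (both being the distinguished dualizable object), which is exactly what the statement asserts.

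The main obstacle is the bookkeeping in the middle paragraph: verifying that maps out of the pseudopushout really do reduce to maps $(\CC, J) \to (\CX, H)$ \emph{in} $\mathfrak{J}$ \emph{promoted} to $\mathfrak{J}_{\vee}$, with all the coherence 2-cells matching those in the biadjunction of Corollary~\ref{flex}. One must be careful that "having a dual" is a property-like (flexible) structure, so that the factor $(\mathbb{N}[1^{\vee}], 1) \to (\CX, H)$ contributes essentially no extra choices beyond the dual pair already forced by landing in $\mathfrak{J}_{\vee}$; this is where the initiality of $(\mathbb{N}[1^{\vee}], 1)$ in $\mathfrak{J}_{\vee}$ does the real work. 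Once these identifications are pinned down at the level of objects, 1-cells and 2-cells of the hom-categories, the rest is a formal appeal to uniqueness of biadjoints and of corepresenting objects.
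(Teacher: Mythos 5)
Your proposal is correct and takes essentially the same route as the paper's own proof: both reduce $\mathfrak{J}_{\vee}\big((\mathbb{N}[1^{\vee}] +_{\mathbb{N}} \CC,1),(\CX,K)\big)$ to $\mathfrak{J}\big((\CC,J),(\CX,K)\big)$ using fullness of $\mathfrak{J}_{\vee}$ in $\mathfrak{J}$, the coproduct property, and initiality of $(\mathbb{N}[1^{\vee}],1)$ in $\mathfrak{J}_{\vee}$, then identify this with $\mathfrak{J}_{\vee}\big((\CC[J^{\vee}],J),(\CX,K)\big)$ via the biadjunction of Corollary~\ref{flex} and conclude by the bicategorical Yoneda lemma.
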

\begin{proof}
Take $(\CX,K)\in \mathfrak{J}_{\vee}$. We have pseudonatural equivalences
\begin{eqnarray*}\lefteqn{
\mathfrak{J}_{\vee}((\mathbb{N}[1^{\vee}] +_{\mathbb{N}} \CC,1), (\CX,K))} \\
& \simeq & \mathfrak{J}((\mathbb{N}[1^{\vee}] +_{\mathbb{N}} \CC,1), (\CX,K)) \\
& \simeq & \mathfrak{J}((\mathbb{N}[1^{\vee}],1) , (\CX,K))\times  \mathfrak{J}((\CC,J), (\CX,K)) \\
& \simeq & \mathbf{1} \times  \mathfrak{J}((\CC,J), (\CX,K)) \simeq \mathfrak{J}((\CC,J), (\CX,K)) \\
& \simeq &  \mathfrak{J}_{\vee}((\CC[J^{\vee}], J), (\CX,K)) \ .
 \end{eqnarray*}
 The result now follows using the bicategorical Yoneda lemma \cite{14}.
\end{proof}

\section{A comment on enriched versions}\label{Klinear}

For brevity we did not pursue the enriched version of our results. In this section we briefly comment on the $K$-linear setting, for a commutative ring $K$. We say that a category is  monoidal $K$-linear if it comes equipped with a $K$-linear and a monoidal structure where $-\otimes-$ is $K$-linear in each variable.

Starting from a $K$-linear monoidal category $\CC$ and $J\in\CC$, Section~\ref{ConstructionC[J*]} still gives us a monoid $(\CG_0,\otimes,\mathbb{I})$ and graph $\CG=(\CG_0,\CG_1)$. Then we can form the free $K$-linear category $\CF_K\CG$ on the graph. Morphisms in $\CF_K\CG$ are formal $K$-linear combinations of morphisms in $\CF\CG$. We then consider the same relations (i)-(vi) from Section~\ref{ConstructionC[J*]} on $\CF_K\CG$ as we did on $\CF\CG$ with only modification that (i) needs to be extended to
$$\overline{E\otimes (\lambda f+\mu g)\otimes F}\;\sim\; \lambda E\otimes \bar{f}\otimes F +\mu E\otimes\bar{g}\otimes F,$$
for $\lambda,\mu\in K$ and $f,g\in\CC(C,D)$. We then denote again by $\CC[J^\vee]$ the corresponding quotient category of $\CF_K\CG$. That $\CC[J^\vee]$ is $K$-linear monoidal such that the canonical strict monoidal functor $\CC\to\CC[J^\vee]$ (which now is $K$-linear) satisfies the corresponding universal property follows exactly as before. Also the proof of the fullness of $\CC\to \CC[J^\vee]$ carries over verbatim. The constructions in Section~\ref{Omegafaithfulsection} are written such that the enriched version is obtained by trivial replacement of the category Set. In particular $\mathrm{Mod}(\CC,\CC)$ can be defined as the category of functors from $\CC^{\mathrm{op}}\times \CC$ to the category of $K$-modules which are $K$-linear in each variable, or as $K$-linear functors from $\CC^{\mathrm{op}}\otimes_K\CC$. Consequently $\CC\to \CC[J^\vee]$ is still fully faithful. Furthermore, the technique in Section~\ref{coends} shows for instance that we get an isomorphism of $K$-modules
$$\int^{X\in \CC}{\CC[J^{\vee}](U\partial_m,X\ox J)\otimes_K \CC(X\ox U_m,V)} \xra{\cong} \CC[J^{\vee}](U,V),$$
for $V\in\ob\CC$ and $U:[m]\to\ob\CC$.

Note that the definition of $\CC[J^\vee]$ in general actually depends on whether we view $\CC$ as a $K$-linear category or not. For instance, if $J$ has an endomorphism which is not a scalar multiple of the identity, the set of morphisms $\mathbb{I}\to J^\vee\otimes J$ depends on which definition we use.

\section{A comment on autonomisation}\label{Auton}
We briefly justify our focus on the adjoining of {\em one} (right) dual. Firstly, we can define $\CC[{}^\vee J]$ similarly to $\CC[J^\vee]$ by adjoining a left dual. In particular we can set $\CC[{}^\vee J]=(\CC^{\mathrm{rev}}[J^\vee])^{\mathrm{rev}}$, where $-^{\mathrm{rev}}$ denotes the reverse of a monoidal category, which switches the order of the tensor product. Using the explicit model of our categories $\CC[J^\vee]$ and via the principle of transfinite induction we can then construct an autonomous category from any strict monoidal category as a direct limit by iteratively adjoining left and right duals. 

It is instructive to break up the above procedure in two steps. In a first step we can adjoin all iterated left and right duals of a given object in a strict monoidal category. Just as in Section~\ref{RemPushN} one observes that this is actually a pseudopushout with respect to the free autonomous monoidal category on a single object of Example 3 in Section~\ref{Ioid}. Now we can construct an autonomous category from a small strict monoidal category $\CC$ by iteratively adjoining (at once) all duals of a given object
 along a well-order on $\ob\CC$.

We denote the category obtained in the above procedure by $\mathrm{Auton}\CC$. It follows from Theorem~\ref{Omegaff} and standard techniques that the canonical monoidal functor $\CC\to\mathrm{Auton}\CC$ is fully faithful. Furthermore, by construction and Corollary~\ref{flex}, the functor $\CC\to\mathrm{Auton}\CC$ is universal in the sense that it yields an equivalence between the categories of strong monoidal functors from $\CC$ and $\mathrm{Auton}\CC$ to any autonomous monoidal category. All of the above remains true in the $K$-linear setting.

\begin{center} 
--------------------------------------------------------
\end{center}

\appendix

\end{document}